\documentclass[11pt]{article}

\usepackage{amsmath, amsthm,amsfonts, amssymb,mathtools,enumerate}
\usepackage{mathrsfs}
\usepackage{stackrel}
\usepackage{tikz}
\usepackage{tkz-euclide}
\usetikzlibrary{snakes}
\usepackage{color}
\usepackage[pdfauthor={ },bookmarksnumbered,hyperfigures,colorlinks=true,citecolor=blue,linkcolor=teal,pdfstartview=FitH]{hyperref}

\usepackage{orcidlink}

\theoremstyle{plain}
\newtheorem{theorem}{Theorem}[section]
\newtheorem{lemma}[theorem]{Lemma}
\newtheorem{proposition}[theorem]{Proposition}

\usepackage[capitalize]{cleveref}
\Crefname{fact}{Fact}{Facts}
\Crefname{claim}{Claim}{Claims}

\theoremstyle{remark}
\newtheorem{remark}[theorem]{Remark}
\newtheorem{definition}[theorem]{Definition}
\newtheorem{question}[theorem]{Question}
\newtheorem{conjecture}[theorem]{Conjecture}

\topmargin 0in
\oddsidemargin .01in
\textwidth 6.5in
\textheight 9in
\evensidemargin 1in
\addtolength{\voffset}{-.6in}
\addtolength{\textheight}{0.22in}
\parskip \medskipamount
\parindent	0pt








\newcommand{\reff}{\mathcal{R}_{\mathrm{eff}}}
\newcommand{\pceff}{\mathcal{C}_{p}}   
\newcommand{\preff}{\mathcal{R}_{p}}   

\newcommand{\be}{\begin{equation}}
	\newcommand{\ee}{\end{equation}}

\begin{document}
\author{
	Pengfei Tang\thanks{Center for Applied Mathematics, Tianjin University.
		Email: \textsf{pengfei\_tang@tju.edu.cn}.}
	~\orcidlink{0000-0003-4268-6015}
}
\date{}
\title{Maximum flow and self-avoiding walk on bunkbed graphs}
\maketitle

\begin{abstract}
 We consider two combinatorial models on bunkbed graphs: maximum flow and self-avoiding walks. A bunkbed graph is defined as the Cartesian product $G\times K_2$, where $G$ is a finite  graph and $K_2$ is the complete graph on two vertices, labelled $0$ and $1$. For the maximum flow problem, we show that if the bunkbed graph $G\times K_2$ has non-negative, reflection-symmetric edge capacities, then for any 
$u, v\in V(G)$, the maximum flow strength from $(u,0)$ to $(v,0)$ in $G\times K_2$ is at least as large as that from $(u,0)$ to $(v,1)$.

For the self-avoiding walk model on a bunkbed graph $G\times K_2$, we investigate whether there are more self-avoiding walks from $(u,0)$ to $(v,1)$ than from $(u,0)$ to $(v,0)$. We prove that this holds when $G=K_n$ is a complete graph and $n$ is sufficiently large. Additionally, we provide examples where the statement does not holds and pose the question of whether it remains true when $\{u,v\}$ is not a cut-edge of $G$.
\end{abstract}

\section{Introduction and main results} \label{sec: intro}

Given a finite (unoriented) graph $G=(V,E)$, the \textbf{bunkbed graph} with base $G$ is defined as the Cartesian product  $G\times K_2$. For simplicity, a vertex $(u,i)\in V(G\times K_2)=V(G)\times \{0,1\}$ will be denoted as $u_i$ for $i\in\{0,1\}$. The edge set $E(G\times K_2)$ consists of two types of  edges: \textbf{horizontal} edges, which correspond to $e\in E(G)\times \{0,1\}$, and \textbf{vertical} edges, which take the form $u_0u_1$ for $u\in V(G)$. 

 The bunkbed conjecture  states that in Bernoulli$(p)$ bond percolation on $G\times K_2$, a vertex $u_0$ is at least as  likely to be connected to $v_0$ as to $v_1$. This conjecture was first proposed---albeit in a slightly different formulation---by Kasteleyn \cite[Remark 5]{Berg_Kahn2001} and was recently proved  false \cite{GPZ2024}. For further details on the final negative resolution of this conjecture, interested readers may refer to \cite{GPZ2024,Hollom2024}.
 
 Beyond percolation, various other random processes on bunkbed graphs have been studied, including random walks \cite{Bollobas_Brightwell1997,Haggstrom1998} and the Ising and random-cluster model \cite{Haggstrom2003probability}.  In the present work, however, we shift our focus from probabilistic models to two combinatorial  models: \textbf{maximum flow} and \textbf{self-avoiding walk} on bunkbed graphs.

\subsection{Maximal flow}

The maximum flow problem is well known, and readers unfamiliar with it may refer to \cite{Ahuja2009} for an introduction.  

Given a finite, unoriented graph \( G = (V,E) \), we consider flows on \( G \) by treating each edge in \( E \) as occurring with both orientations. We denote the set of oriented edges by  
\[
\overrightarrow{E} = \big\{ \overrightarrow{e}, \overleftarrow{e} \colon e \in E \big\}.
\]  
Let \( \mathscr{F}(G) \) be the space of \textbf{antisymmetric} functions \( \theta \) on \( \overrightarrow{E} \), meaning that for each oriented edge \( \overrightarrow{e} \in \overrightarrow{E} \), we have  
\[
\theta(\overrightarrow{e}) = -\theta(\overleftarrow{e}).
\]  
We represent each oriented edge \( \overrightarrow{e} \) as having an arrow directed from its \textbf{tail} \( \overrightarrow{e}^- \) to its \textbf{head} \( \overrightarrow{e}^+ \).  

Define the operator \( d^* : \mathscr{F}(G) \to \mathbb{R}^{V} \) by  
\[
(d^* \theta)(x) := \sum_{\overrightarrow{e} \colon \overrightarrow{e}^- = x} \theta(\overrightarrow{e}).
\]

\begin{definition}\label{def: flow and its strength}
An element $\theta\in\mathscr{F}(G)$ is called a  \textbf{flow} on $G$. 
We say that  $\theta\in\mathscr{F}(G)$ is a flow \textbf{from $A$ to $Z$} if 
\[
(d^*\theta)(x) =
\begin{cases}
0, & \text{if } x \notin A \cup Z, \\
\geq 0, & \text{if } x \in A, \\
\leq 0, & \text{if } x \in Z.
\end{cases}
\]
Here, $A$ is referred to as the \textbf{source}, and $Z$ as the \textbf{sink}. 
If $\theta$ is a flow from $A$ to $Z$, we define its \textbf{strength} as
\[
\mathrm{Strength}(\theta):=\sum_{a\in A} (d^*\theta)(a)\,.
\]
\end{definition}

\begin{definition}\label{def: capacity and admissible flow}
	A \textbf{capacity} is  a nonnegative function $c:E\to [0,\infty]$ defined on the edge set of $G$. We refer to the pair $(G,c)$ a \textbf{capacitated network}. 
	
	Given a finite capacitated network $(G,c)$,   a flow $\theta\in\mathscr{F}(G)$ is said to be \textbf{admissible} for $(G,c)$ if 
	\[
	\big|\theta(\overrightarrow{e})\big| \leq c(e) \quad \text{for all } e \in E\,.
	\]
\end{definition}

\begin{definition}\label{def: maximal flow}
	If $A$ and $Z$ are two disjoint subsets of vertices in $G$, and $c$ is a capacity function on $G$, then the \textbf{maximum flow} from $A$ to $Z$, denoted by $\mathbf{MF}(A,Z)$, is defined as 
	\[
	\mathbf{MF}(A,Z):=\sup\big\{ \mathrm{Strength}(\theta) \,;\, \theta  \mbox{ is an admissible flow from }A \mbox{ to }Z \big\}\,.
	\]
	When $A=\{s\}$ and $Z=\{t\}$ are singletons, we  abbreviate $\mathbf{MF}(\{s\},\{t\})$ as $\mathbf{MF}(s,t)$ .
\end{definition}

Our main results regarding the maximum flow problem on bunkbed graphs is as follows:
\begin{theorem}\label{thm: maximal flow on bunkbed}
	Suppose $c$ is a reflection-symmetric capacity on the bunkbed graph $G\times K_2$, meaning that $c(e_0)=c(e_1)$ for all $e\in E(G)$. Then, for any two vertices $x,y\in V(G)$, we have 
	\[
	\mathbf{MF}(x_0,y_0)\geq \mathbf{MF}(x_0,y_1)\,.
	\]
\end{theorem}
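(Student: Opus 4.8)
The plan is to exploit the $\mathbb{Z}_2$-symmetry of the bunkbed graph and the max-flow/min-cut duality. Let $\sigma$ denote the reflection automorphism of $G \times K_2$ that swaps $u_0 \leftrightarrow u_1$ for every $u \in V(G)$; by hypothesis $c$ is $\sigma$-invariant. Start from an admissible flow $\theta$ from $x_0$ to $y_1$ of strength $\mathbf{MF}(x_0,y_1)$ (or of strength arbitrarily close to it, to handle the supremum). Applying $\sigma$ produces an admissible flow $\sigma_*\theta$ from $x_1$ to $y_0$ of the same strength. The idea is to combine $\theta$ and $\sigma_*\theta$ to build, out of their ``average,'' a flow that connects $\{x_0\}$ to $\{y_0\}$ directly. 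Concretely, consider $\theta' := \tfrac12(\theta + \sigma_*\theta)$; this is admissible since the admissible set is convex and $\sigma$-symmetric. The divergence $d^*\theta'$ is supported on $\{x_0, x_1, y_0, y_1\}$, is symmetric under $\sigma$, and is nonnegative on $\{x_0,x_1\}$ and nonpositive on $\{y_0,y_1\}$, with total outflow at the sources equal to $\mathbf{MF}(x_0,y_1)$. So $\theta'$ is an admissible flow from $\{x_0,x_1\}$ to $\{y_0,y_1\}$ of strength $\mathbf{MF}(x_0,y_1)$.

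The crux is then to pass from a flow with sources $\{x_0,x_1\}$ and sinks $\{y_0,y_1\}$ to one with source $x_0$ and sink $y_0$ of at least the same strength — this is where the reflection symmetry must be used in an essential way, since in general such a reduction is false. The plan is to route the ``extra'' flow that enters at $x_1$ and leaves at $y_1$ through the vertical edges. Here I would invoke a min-cut argument: by max-flow/min-cut, $\mathbf{MF}(x_0,y_0)$ equals the minimum capacity of an edge cut separating $x_0$ from $y_0$. Given such a minimum cut $\Pi$ (a set of edges whose removal disconnects $x_0$ from $y_0$, inducing a vertex partition $S \sqcup S^c$ with $x_0 \in S$, $y_0 \in S^c$), I would like to show its capacity is at least $\mathbf{MF}(x_0,y_1)$. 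Consider the ``reflected'' cut $\sigma(\Pi)$, which separates $x_1$ from $y_1$ with the same capacity. The key combinatorial claim is that $\Pi \cup \sigma(\Pi)$ contains an edge cut separating $x_0$ from $y_1$ whose capacity is at most $2\,c(\Pi)$ — more precisely, that one can select from the symmetric configuration $\Pi \cup \sigma(\Pi)$ a sub-cut separating $x_0$ from $y_1$ of capacity at most $c(\Pi)$, using that the vertical edges come in $\sigma$-fixed pairs (each $u_0u_1$ is mapped to itself). Analyzing how $S$ and $\sigma(S)$ interact — in particular which vertical edges $u_0 u_1$ are cut — should let one extract the desired cut by taking, say, $S \cap \sigma(S)$ or $S \cup \sigma(S)$ as the source side of a cut separating $x_0$ from $y_1$, and bounding its capacity via submodularity of the cut function together with $\sigma$-invariance.

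The main obstacle I anticipate is exactly this last selection step: making precise the claim that from the $\sigma$-symmetric pair of $x_0$–$y_0$ and $x_1$–$y_1$ cuts one can extract an $x_0$–$y_1$ cut of no greater capacity. Submodularity gives $c(\text{cut}(S \cap \sigma S)) + c(\text{cut}(S \cup \sigma S)) \le c(\text{cut}(S)) + c(\text{cut}(\sigma S)) = 2\,c(\Pi)$, and by symmetry $c(\text{cut}(S \cap \sigma S)) = c(\text{cut}(\overline{S \cap \sigma S}))$ and similarly for the union, but one still needs to check that at least one of these two symmetric sets is the source side of a genuine $x_0$–$y_1$ cut (i.e. contains $x_0$ but not $y_1$, or vice versa). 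Since $x_0 \in S$ forces $x_1 \in \sigma S$ and $y_0 \notin S$ forces $y_1 \notin \sigma S$, one checks the four cases for whether $x_1 \in S$ and whether $y_1 \in S$; in the favorable cases $S \cap \sigma S$ or $S \cup \sigma S$ works directly, and in the remaining case one argues that the vertical edge at $x$ or at $y$ must already lie in $\Pi$, so it can be added to recover a valid cut without exceeding the bound $c(\Pi)$. Once this is established, $\mathbf{MF}(x_0,y_1) \le c(\Pi) = \mathbf{MF}(x_0,y_0)$, completing the proof. I would organize the write-up around the min-cut formulation from the start rather than the flow-averaging, since the cut side makes the symmetry manipulations cleaner, though the flow-averaging observation is a useful sanity check and may even give an alternative route via a clever rerouting of $\theta'$ along vertical edges.
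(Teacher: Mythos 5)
Your overall strategy---pass to the min-cut side and uncross a minimum $x_0$--$y_0$ cut with source side $S$ against its reflection $\sigma S$---is in the right spirit and close to the paper's argument, but the specific uncrossing you propose fails at exactly the step you flag as the crux. The candidates $S\cap\sigma S$ and $S\cup\sigma S$ are the wrong ones, for two separate reasons. First, validity: in the case $x_1\notin S$ and $y_1\in S$, \emph{neither} set is the source side of an $x_0$--$y_1$ cut ($S\cap\sigma S$ misses $x_0$ because $x_0\in\sigma S$ iff $x_1\in S$, and $S\cup\sigma S$ contains $y_1$), so your fallback of ``adding the vertical edge at $x$ or $y$'' has no cut to add it to. Second, and more seriously, capacity: submodularity only bounds the \emph{sum} of the two cut capacities by $2c(\Pi)$, and the summand you must discard can be zero, so the candidate you keep can have capacity as large as $2c(\Pi)$. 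Concretely, take $G=K_3$ on $\{x,y,z\}$ with all horizontal capacities $1$, $c(y_0y_1)=0.1$ and $c(x_0x_1)=c(z_0z_1)=10$. The minimum $x_0$--$y_0$ cut is given by $S=V\setminus\{y_0\}$ with capacity $2.1$; here $S\cup\sigma S=V$ has cut capacity $0$, while the valid candidate $S\cap\sigma S=V\setminus\{y_0,y_1\}$ has cut capacity $4>2.1$. (In this example $\sigma S$ itself happens to separate $x_0$ from $y_1$ with capacity $2.1$, but that is an accident of the example, not an output of your argument.)

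The correct uncrossing is level-dependent rather than lattice-theoretic on vertex sets: writing $f=\mathbf{1}_S$, set $h(u_0):=f(u_0)\vee f(u_1)$ and $h(u_1):=f(u_0)\wedge f(u_1)$ for every $u\in V(G)$, i.e.\ take the union of $S$ and $\sigma S$ on the bottom level and their intersection on the top level. This automatically keeps $x_0$ on the source side and forces $y_1$ onto the sink side in \emph{all} cases, preserves the contribution of every vertical edge exactly (since $|h(u_0)-h(u_1)|=|f(u_0)-f(u_1)|$), and does not increase the total contribution of any reflected pair of horizontal edges, by the rearrangement inequality for the convex function $\varphi(t)=|t|$ (Lemma~\ref{lem: an inequality for convex functions}); in the example above it produces $V\setminus\{y_1\}$, of capacity $2.1$. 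This is precisely the paper's proof, carried out through the linear-programming form of min-cut (Lemma~\ref{lem: lpf maxflow}) so that one may work with arbitrary $[0,1]$-valued potentials rather than indicators. Your flow-averaging paragraph is a correct observation but, as you acknowledge, the passage from a $\{x_0,x_1\}\to\{y_0,y_1\}$ flow to an $x_0\to y_0$ flow of the same strength is not justified there and is essentially equivalent to the theorem itself.
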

Note that we impose no restrictions on the capacities on the vertical edges of $G\times K_2$ other than non-negativity.

\subsection{Self-avoiding walk}

Given a connected, unoriented graph $G=(V,E)$ and $x\in V$, an $n$-step \textbf{self-avoiding walk} (SAW) $w$ beginning at $x$ is a sequence  $(w_0,e_1,w_1,e_2,w_2,\ldots,w_{n-1},e_n,w_n)$, where $w_0=x$,  $w_j\neq w_k$ for $0\leq j<k\leq n$,  and $e_i$ is an edge connecting $w_{i-1}$ and $w_i$ for each $1\le i\le n$. We denote by $u\sim v$ if there is an edge between $u,v\in V(G)$. 
 We refer to \cite{Bauerschmidt_etal2012lectures_SAW,Madras_Slade1993SAW_book} for background on self-avoiding walks.
While many studies on self-avoiding walks  focused on infinite graphs, such as $\mathbb{Z}^d$,  here we focus on finite bunkbed graphs.

Suppose $G=(V,E)$ is a finite, connected, simple graph. For $x,y\in V$, let $\mathscr{S}(x,y)$ be the set of self-avoiding walks from $x$ to $y$. The following question will be the main focus of our study.
\begin{question}\label{ques: SAW on bunkbed}
	Consider self-avoiding walks on a bunkbed graph $G\times K_2$, where $G$ is a finite, connected, simple graph. Is it true that for all $u\neq v\in V(G)$, we have  
	\[
	\big|\mathscr{S}(u_0,v_0)\big|\leq \big| \mathscr{S}(u_0,v_1) \big|?
	\]
	Here, $\big|\mathscr{S}(\cdot,\cdot)\big|$  denotes the cardinality of $\mathscr{S}(\cdot,\cdot)$. 
\end{question}

Question \ref{ques: SAW on bunkbed} generally has a negative answer, as shown in  Proposition~\ref{prop: SAW on ladder} for the ladder graph. Here we define $P_n\times K_2$ a  \textit{ladder graph}, where $P_n$ is a  \textit{path-graph} with length $n$, i.e., $V(P_n)=\{0,1,\ldots, n\}$ and $E(P_n)=\big\{ (i,i+1)\colon i=0,\ldots,n-1 \big\}$.

\begin{proposition}\label{prop: SAW on ladder}
	Consider self-avoiding walks on the bunkbed graph \( P_n \times K_2 \).  
	
	If \( n \in \{2, 3\} \), then we have 
	\[
	\big|\mathscr{S}(u_0, v_0)\big| = \big|\mathscr{S}(u_0, v_1)\big| \quad \text{for all} \quad u, v \in V(P_n).
	\]  
	
	If \( n \geq 4 \), then 
	\[
	\big|\mathscr{S}(u_0, v_0)\big| > \big|\mathscr{S}(u_0, v_1)\big| \quad \text{for all} \quad u, v \in V(P_n) \text{ with } u \sim v \text{ and neither } u \text{ nor } v \text{ is an endpoint of } P_n,
	\]  
	and
	\[
	\big|\mathscr{S}(u_0, v_0)\big| = \big|\mathscr{S}(u_0, v_1)\big| \quad \text{otherwise.}
	\]
\end{proposition}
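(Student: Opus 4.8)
The plan is to analyze the ladder $P_n \times K_2$ directly by exploiting its almost-linear structure: a self-avoiding walk on a ladder is forced to make a sequence of limited local moves, so $\mathscr{S}(u_0,v_0)$ and $\mathscr{S}(u_0,v_1)$ can be described combinatorially and compared. First I would set up coordinates, writing vertices as $i_0,i_1$ for $i=0,\ldots,n$, and observe that the vertical edges partition the ladder into $n+1$ ``rungs.'' The key structural fact I would establish is that any SAW on $P_n\times K_2$, when projected onto $P_n$, visits a contiguous interval of rungs, and within that interval its behavior is highly constrained: at each rung it either passes straight through on one level, or crosses the vertical edge. This should let me encode a SAW by the interval of rungs it touches together with a short ``profile'' of which verticals it uses, and hence count $|\mathscr{S}(u_0,v_0)|$ and $|\mathscr{S}(u_0,v_1)|$ as explicit (piecewise-defined) functions of $u,v,n$.

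The small cases $n\in\{2,3\}$ I would handle by a finite, essentially exhaustive enumeration — there are few enough SAWs that one can list them, or invoke the encoding above with the interval length bounded by $3$ or $4$; one checks the equality $|\mathscr{S}(u_0,v_0)| = |\mathscr{S}(u_0,v_1)|$ rung-pair by rung-pair. For the generic $n\ge 4$ case the heart of the argument is the comparison when $u\sim v$ in $P_n$ and neither is an endpoint, say $v=u+1$ with $1\le u$ and $u+1\le n-1$. Here I would argue that every walk in $\mathscr{S}(u_0,v_1)$ can be injectively mapped into $\mathscr{S}(u_0,v_0)$ — for instance by a ``reflection/surgery'' operation that flips the level of a suitable suffix of the walk, using the reflection symmetry swapping levels $0\leftrightarrow 1$ — and then exhibit at least one walk in $\mathscr{S}(u_0,v_0)$ not in the image. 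The extra walks should come precisely from the room available on both sides of the edge $\{u,u+1\}$: since $u-1\ge 0$ and $u+2\le n$ both exist, there is a walk that goes $u_0 \to v_0$ using detours on level $0$ to the left of $u$ and to the right of $v$ that has no level-swapped counterpart ending at $v_1$, because ending at $v_1$ would force an extra vertical crossing that collides with one of the detours. When $u$ or $v$ is an endpoint, or $u\not\sim v$, the same surgery becomes a bijection (the obstructing detour is unavailable, or the walk must traverse a long stretch of the ladder that is itself symmetric under the level swap), giving equality.

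I expect the main obstacle to be making the injection-plus-surplus argument for the $u\sim v$ case fully rigorous: one must define the level-flipping surgery so that it genuinely lands in $\mathscr{S}(u_0,v_0)$ (self-avoidance must be preserved, which requires choosing the flip point carefully — e.g. the last time the walk is at the column of $u$ or $v$), verify it is injective, and then pin down exactly which walks lie outside its image and show this set is nonempty for $n\ge 4$ but empty in the boundary/non-adjacent cases. A secondary but manageable difficulty is bookkeeping the ``otherwise'' cases uniformly: I would organize them by the position of $u,v$ relative to the endpoints and to each other, showing in each that the surgery is a bijection, possibly after first reducing to the sub-ladder spanned by the columns actually reachable by SAWs between $u$ and $v$. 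Throughout, the reflection symmetry $c(e_0)=c(e_1)$-analogue for SAWs — namely the graph automorphism swapping the two copies of $G$ — is the organizing principle, exactly as in Theorem~\ref{thm: maximal flow on bunkbed}, and I would flag that the ladder already shows Question~\ref{ques: SAW on bunkbed} can fail, motivating the restriction that $\{u,v\}$ not be a cut-edge.
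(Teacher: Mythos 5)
Your outline identifies the right organizing principle (the level-swap symmetry plus surgery on the vertical edges at the columns of $u$ and $v$), and the qualitative picture you describe -- an injection $\mathscr{S}(u_0,v_1)\hookrightarrow\mathscr{S}(u_0,v_0)$ whose surplus is nonempty exactly when $u\sim v$ and both are interior -- is correct. But as written this is a plan, not a proof: the one construction everything hinges on is never defined. You say the surgery ``flips the level of a suitable suffix,'' and then yourself flag that choosing the flip point so that self-avoidance is preserved, proving injectivity, and characterizing the image are the main obstacles; those are precisely the mathematical content of the proposition, and none of it is carried out. A single uniform ``flip the suffix after the last visit to the column of $u$ or $v$'' rule will not work: a flipped suffix can collide with the unflipped prefix (the prefix may already occupy both levels of a column that the suffix revisits), and walks whose last step is the vertical edge $v_0v_1$ need a different treatment (delete that edge) than walks that reach $v_1$ without ever touching $v_0$ (append that edge) or walks that use $u_1$ (reflect the whole walk and adjust at $e_u$). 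The paper makes this precise by partitioning $\mathscr{S}(u_0,v_j)$ into five classes $\mathscr{S}_1,\dots,\mathscr{S}_5$ according to how the walk interacts with $u_1$, $v_{1-j}$, $e_u$, $e_v$, and exhibiting explicit bijections $\mathscr{S}_i(u_0,v_0)\leftrightarrow\mathscr{S}_i(u_0,v_1)$ for $i=1,2,3,4$ (Lemma~\ref{lem: bijections between some subsets of SAW}); everything then reduces to comparing $\mathscr{S}_5(u_0,v_0)$ with $\mathscr{S}_5(u_0,v_1)$.

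The second gap is the ``otherwise'' cases. Your claim that there ``the same surgery becomes a bijection'' is asserted, not proved, and it hides a genuine case analysis. When $u\not\sim v$ and both are interior, the residual sets $\mathscr{S}_5(u_0,v_0)$ and $\mathscr{S}_5(u_0,v_1)$ are both \emph{nonempty}; equality is not obtained by showing an obstruction disappears but by an actual count (both equal $k(n-k-2)$ when $v=u+2$) or, for $v>u+2$, by reducing the middle portion of the walk to SAWs on a sub-ladder between endpoint columns and invoking the endpoint case. Similarly, in the adjacent interior case you need to verify both that $\mathscr{S}_5(u_0,v_1)=\emptyset$ (a walk from $u_0$ to $v_1$ visiting $u_1$ and $v_0$ without using either vertical edge $e_u,e_v$ is impossible on the ladder -- this requires a short parity/ordering argument) and that $\mathscr{S}_5(u_0,v_0)$ has exactly $k(n-k-1)>0$ elements, which is where the hypothesis $n\ge4$ and interiority of $u,v$ enter. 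To complete your proposal you would need to (i) write down the surgery as a case-by-case map matching the five classes above, (ii) verify self-avoidance and bijectivity in each case, and (iii) carry out the residual counts; at that point you would essentially have reconstructed the paper's argument.
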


If we restrict ourselves to the case of \( K_n \times K_2 \), then we obtain the following partial positive answer to Question \ref{ques: SAW on bunkbed}.
\begin{theorem}\label{thm: SAW on complete}
Consider self-avoiding walks on a bunkbed graph \( K_n \times K_2 \) with \( n \geq 2 \).  
Then, for all distinct vertices \( u, v \in V(K_n) \), we have the following:

\[
\begin{array}{ll}
\big|\mathscr{S}(u_0, v_0)\big| = \big|\mathscr{S}(u_0, v_1)\big| & \text{if } n = 2, \\
\big|\mathscr{S}(u_0, v_0)\big| < \big|\mathscr{S}(u_0, v_1)\big| & \text{if } n \in \{ 3, 4, 5 \}, \\
\big|\mathscr{S}(u_0, v_0)\big| < \big|\mathscr{S}(u_0, v_1)\big| & \text{if } n \text{ is sufficiently large.}
\end{array}
\]

\end{theorem}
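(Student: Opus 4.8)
We may fix distinct $u,v\in V(K_n)$, since by symmetry $|\mathscr{S}(u_0,v_0)|$ and $|\mathscr{S}(u_0,v_1)|$ depend only on $n$; write $f_0(n):=|\mathscr{S}(u_0,v_0)|$ and $f_1(n):=|\mathscr{S}(u_0,v_1)|$. For $n=2$ the graph $K_2\times K_2$ is a $4$-cycle, so $f_0=f_1=2$; for $n\in\{3,4,5\}$ one simply enumerates all self-avoiding walks (or runs the obvious length-by-length recursion) and checks $f_0(n)<f_1(n)$ directly, e.g.\ $f_0(3)=8<9=f_1(3)$. The substance of the theorem is the large-$n$ regime, which I would establish by asymptotic enumeration. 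Call the two coordinates of a vertex its \emph{column} and \emph{level}, call a step of a walk along a vertical edge a \emph{toggle}, and let $N^{(j)}_k(n)$ denote the number of walks in $\mathscr{S}(u_0,v_j)$ with exactly $k$ toggles, so $N^{(j)}_k=0$ unless $k\le n$ and $k\equiv j\pmod2$, and $f_j(n)=\sum_k N^{(j)}_k(n)$. A walk counted by $N^{(j)}_k$ breaks canonically into $k+1$ maximal horizontal runs $L^{(1)},\dots,L^{(k+1)}$, each a self-avoiding path within a single level, with consecutive runs on opposite levels and with $L^{(i)},L^{(i+1)}$ meeting exactly in the column $a_i$ carrying the $i$-th toggle; here $L^{(1)}$ starts at $u_0$ and $L^{(k+1)}$ ends at $v_j$. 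One verifies that $a_1,\dots,a_k$ are pairwise distinct and --- the key point --- that conversely the walk is determined by $(a_1,\dots,a_k)$ together with an arbitrary choice of the runs $L^{(i)}$ subject \emph{only} to the two \emph{independent} conditions that the level-$0$ runs be pairwise vertex-disjoint and the level-$1$ runs be pairwise vertex-disjoint (nothing couples the two levels, a column being allowed on both). When $a_1,\dots,a_k\notin\{u,v\}$ (the \emph{generic} case), the level-$0$ runs then constitute a system of $r_0$ pairwise vertex-disjoint paths in $K_n$ joining prescribed vertex pairs with all $2r_0$ endpoints distinct, and the level-$1$ runs a like system of $r_1$ paths, where $(r_0,r_1)=(j+1,j)$ if $k=2j$ and $(r_0,r_1)=(j+1,j+1)$ if $k=2j+1$.

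\emph{Counting disjoint path systems.} By summing over the numbers of interior vertices of the individual paths, the number of systems of $r$ pairwise vertex-disjoint paths in $K_m$ joining prescribed pairs with all $2r$ endpoints distinct equals
\[
(m-2r)!\sum_{\ell=0}^{m-2r}\binom{m-r-1-\ell}{r-1}\frac{1}{\ell!}\;=\;\frac{e\,m^{r-1}}{(r-1)!}\,(m-2r)!\,\bigl(1+O(1/m)\bigr)\qquad(r\text{ fixed},\ m\to\infty).
\]
Multiplying the level-$0$ and level-$1$ counts, summing over the $\sim n^{k}$ generic tuples $(a_1,\dots,a_k)$ (non-generic tuples have fewer free $a_i$, hence a smaller power of $n$, and are absorbed in the error), and using $(n-2-s)!\sim(n-2)!\,n^{-s}$, one finds for each fixed $j$ that
\[
N^{(0)}_{2j}(n)\ \sim\ \frac{e^{2}}{j!\,(j-1)!}\;n\bigl((n-2)!\bigr)^{2}\quad(j\ge1),
\qquad
N^{(1)}_{2j+1}(n)\ \sim\ \frac{e^{2}}{(j!)^{2}}\;n\bigl((n-2)!\bigr)^{2}\quad(j\ge0),
\]
while $N^{(0)}_{0}(n)$ --- the number of $u$--$v$ paths in $K_n$ --- is only of order $(n-2)!$, hence negligible.

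\emph{Summation and conclusion.} A walk has at most $n$ toggles, so $f_0(n)=\sum_{j\ge0}N^{(0)}_{2j}(n)$ and $f_1(n)=\sum_{j\ge0}N^{(1)}_{2j+1}(n)$ are finite sums; a uniform bound of the shape $N^{(0)}_{2j}(n),\,N^{(1)}_{2j+1}(n)\le C^{j}(j!)^{-2}\,n\bigl((n-2)!\bigr)^{2}$ valid for all $j$ (including $j$ of order $n$, where the fixed-$j$ expansions break down) lets the estimates above be summed termwise, yielding
\[
f_0(n)\ \sim\ e^{2}\,n\bigl((n-2)!\bigr)^{2}\sum_{j\ge1}\frac{1}{j!\,(j-1)!},
\qquad
f_1(n)\ \sim\ e^{2}\,n\bigl((n-2)!\bigr)^{2}\sum_{j\ge0}\frac{1}{(j!)^{2}}
\]
(the two series being $I_1(2)$ and $I_0(2)$, $I_\nu$ the modified Bessel function). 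Rewriting the first series as $\sum_{i\ge0}\frac{1}{i!\,(i+1)!}$ and comparing it termwise with the second, $\frac{1}{(i!)^{2}}\ge\frac{1}{i!\,(i+1)!}$ for every $i\ge0$ with strict inequality once $i\ge1$, so $\sum_{j\ge0}\frac{1}{(j!)^{2}}>\sum_{j\ge1}\frac{1}{j!\,(j-1)!}$. Hence $f_1(n)/f_0(n)$ tends to a limit strictly greater than $1$, and in particular $f_1(n)>f_0(n)$ for all sufficiently large $n$.

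\emph{The main obstacle} is uniformity, not any single estimate. One must (i) pin down exactly which non-generic tuples $(a_i)$, i.e.\ those with some $a_i\in\{u,v\}$, are admissible, and check that each such family really carries a strictly lower power of $n$; and (ii) prove a domination bound of the type above holding simultaneously for \emph{all} toggle numbers $k\le n$, so that the per-$k$ asymptotics can legitimately be summed. Granting these, the rest is bookkeeping.
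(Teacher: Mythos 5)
Your strategy is sound and, at its analytic core, coincides with the paper's: both arguments classify walks by the number of vertical edges used, count the horizontal pieces as systems of pairwise vertex-disjoint paths in $K_n$ with prescribed endpoints, obtain per-$k$ asymptotics with the constants $e^2/(j!\,(j-1)!)$ and $e^2/(j!)^2$, and conclude by comparing $\sum_{j\ge1}\frac{1}{j!\,(j-1)!}=I_1(2)$ with $\sum_{j\ge0}\frac{1}{(j!)^2}=I_0(2)$. Your leading constants agree with Lemma~\ref{lem: number of SAW estimates} (whose $A_n\sim e^2 I_1(2)(n-3)[(n-2)!]^2$ and $B_n\sim e^2 I_0(2)(n-2)[(n-2)!]^2$ match your $f_0$ and $f_1$ to leading order). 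The structural difference is that the paper first partitions $\mathscr{S}(u_0,v_j)$ into five classes and exhibits bijections between the first four (Lemma~\ref{lem: bijections between some subsets of SAW}), reducing the problem \emph{exactly} to comparing $|\mathscr{S}_5(u_0,v_0)|$ with $|\mathscr{S}_5(u_0,v_1)|$; you compare the full counts asymptotically instead. Your route dispenses with the bijection lemma, but it then obliges you to verify that the contributions you call non-generic (toggles at columns $u$ or $v$, walks missing $u_1$ or $v_1$, and the zero-toggle walks) are $o\bigl(n[(n-2)!]^2\bigr)$ --- which is true, these being essentially the classes $\mathscr{S}_1,\dots,\mathscr{S}_4$ that the paper removes exactly. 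The bijection reduction also yields the small-$n$ cases with almost no computation ($A_3=0$, $B_3=1$, etc.), whereas you must enumerate the full walk sets.

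The two items you flag as gaps are genuine but fillable, and they are precisely what the paper spends its effort proving: the uniform-in-$k$ domination bound you postulate is the analogue of the ratio estimates $q_{k+1}/q_k\le e^2/(k+1)^2$ and $p_{k+1}/p_k\le e^2/(k(k+1))$, valid for \emph{all} admissible $k$ and not merely fixed $k$, which is what legitimizes summing the fixed-$j$ asymptotics over toggle numbers up to order $n$. Until those bounds and the lower-order status of the non-generic families are actually established, your large-$n$ argument is a correct plan rather than a complete proof; with them supplied (by arguments of exactly the kind the paper gives), it goes through.
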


\begin{conjecture}
	The inequality $\big|\mathscr{S}(u_0,v_0)\big|\leq \big| \mathscr{S}(u_0,v_1) \big|$ in \cref{thm: SAW on complete} holds for $K_n\times K_2$ for all $n\ge3$. 
\end{conjecture}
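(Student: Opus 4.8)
The plan is to prove the inequality $\big|\mathscr{S}(u_0,v_0)\big|\le\big|\mathscr{S}(u_0,v_1)\big|$ for every $n\ge 3$ by a short chain of sign-preserving bijections that collapse it to a single residual inequality. Since $K_n$ is vertex-transitive and every automorphism of $K_n$ extends to $K_n\times K_2$ fixing both levels, the two counts do not depend on the choice of distinct $u,v$, so I fix $u$ and $v$ once and for all and partition the walks by how they meet the column $v$. Let $A$ count the walks $u_0\rightsquigarrow v_0$ that never visit $v_1$, and $B$ those that do; symmetrically let $C$ count walks $u_0\rightsquigarrow v_1$ avoiding $v_0$ and $D$ those visiting $v_0$. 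A walk counted by $B$ reaches its terminus $v_0$ either along the vertical edge $v_1v_0$ or along a horizontal edge $a_0v_0$ with $a\ne v$, so I split $B=B_v+B_h$ accordingly; symmetrically I split $D=D_v+D_h$ according to how $v_1$ is reached.

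The two key bijections are surgeries at the terminal vertical edge. Appending the step $v_0\to v_1$ to a walk counted by $A$ yields a self-avoiding walk $u_0\rightsquigarrow v_1$ whose last edge is vertical and which visits $v_0$, and deleting that edge inverts the map; hence $A=D_v$, and the mirror operation gives $C=B_v$. Substituting $A=D_v$ and $C=B_v$ into the target $A+B\le C+D$ cancels all four of these terms and reduces everything to the clean residual statement
\[
B_h\le D_h,
\]
that is, the walks reaching $v_0$ horizontally while also visiting $v_1$ are no more numerous than the walks reaching $v_1$ horizontally while also visiting $v_0$. Every walk counted by $B_h$ or $D_h$ leaves the vertical edge at $v$ unused and embeds both $v_0$ and $v_1$ as horizontally traversed vertices, one of the two being the terminus. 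I have checked that this reduction already reproduces the equality $B_h=D_h=0$ in the base case $n=2$ of \cref{thm: SAW on complete}.

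To attack $B_h\le D_h$ I would invoke the global level-reflection $0\leftrightarrow 1$, an automorphism preserving the horizontal/vertical type of every edge. It carries the family counted by $B_h$ bijectively onto the walks $u_1\rightsquigarrow v_1$ that end horizontally and pass through $v_0$, whereas $D_h$ counts exactly the same walks but started at $u_0$. Thus $B_h\le D_h$ is equivalent to asserting that, among self-avoiding walks terminating horizontally at $v_1$ and passing through $v_0$, at least as many begin at $u_0$ as begin at $u_1$. In this form the inequality is once more a bunkbed-type comparison, with the level defect now transferred from the endpoint to the starting vertex, and I would try to settle it either by an explicit injection that grafts a suitable initial vertical step onto each $u_1$-walk, or, failing a uniform injection, by the run-decomposition generating function: each walk factors uniquely into maximal single-level horizontal runs glued along vertical edges, and expanding $D_h-B_h$ in this basis should present it as a sum over gluing patterns whose coefficients I expect to be nonnegative.

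The hard part is exactly this residual inequality $B_h\le D_h$. The reflection trades the endpoint asymmetry for a start-vertex asymmetry that is not manifestly signed, and the side condition ``also visits $v_0$'' couples the two levels in a way that blocks a naive one-step injection. Establishing the residual count uniformly for all $n\ge 3$, rather than only asymptotically as in the large-$n$ case of \cref{thm: SAW on complete}, is precisely the content left open by the conjecture, and I expect essentially all of the difficulty—whether resolved combinatorially or enumeratively—to be concentrated there.
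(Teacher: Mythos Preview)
The statement you are attempting to prove is listed in the paper as a \emph{conjecture}; the paper does not prove it. Theorem~\ref{thm: SAW on complete} establishes the strict inequality only for $n\in\{3,4,5\}$ (by direct computation) and for all sufficiently large $n$ (by an asymptotic estimate), leaving the intermediate range open.

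Your reduction is correct as far as it goes, but it is coarser than what the paper already records. Your bijections $A=D_v$ and $C=B_v$ are precisely the cases $i=1,2$ of Lemma~\ref{lem: bijections between some subsets of SAW}. The paper carries the decomposition further by also partitioning according to the behaviour at the column of $u$---whether $u_1$ is visited, and whether the \emph{first} step is the vertical edge $e_u=u_0u_1$---and exhibits two additional bijections (the classes $\mathscr{S}_3$ and $\mathscr{S}_4$), each built from the level reflection combined with adding or deleting the initial step $e_u$. This is essentially the ``graft a suitable initial vertical step'' move you sketch but do not carry out. After those four bijections the residual inequality is $|\mathscr{S}_5(u_0,v_0)|\le|\mathscr{S}_5(u_0,v_1)|$, where $\mathscr{S}_5$ consists of walks that visit both $u_1$ and $v_{1-j}$ yet use neither vertical edge $e_u,e_v$. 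Your residual $B_h\le D_h$ is still the comparison of the unions $\mathscr{S}_3\cup\mathscr{S}_4\cup\mathscr{S}_5$ on the two sides, so the paper's reduction is strictly sharper.

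Your closing paragraph is accurate: you do not prove the residual inequality, and neither does the paper for general $n$. The paper settles $|\mathscr{S}_5(u_0,v_0)|<|\mathscr{S}_5(u_0,v_1)|$ only for small $n$ via the explicit formulas of Lemma~\ref{lem: formula for the number of SAW on complete graphs times K2} and for large $n$ via the asymptotics of Lemma~\ref{lem: number of SAW estimates}. So there is no complete proof to compare against; if you want to push the bijective programme further you should at least incorporate the $\mathscr{S}_3$ and $\mathscr{S}_4$ bijections first, since the genuine obstruction sits in $\mathscr{S}_5$.
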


\subsection{Organization of the paper}

We prove \cref{thm: maximal flow on bunkbed} in Section~\ref{sec: maximal flow}. To do so, we first generalize the known result for effective resistance on bunkbed graphs \cite{Bollobas_Brightwell1997} to \( p \)-resistance. Next, we apply the max-flow min-cut theorem and a linear programming formulation of the min-cut problem to show that \cref{thm: maximal flow on bunkbed} is essentially a special case of the \( p \)-resistance problem.

For self-avoiding walks on bunkbed graphs, we prove Proposition~\ref{prop: SAW on ladder} and \cref{thm: SAW on complete} in Section~\ref{sec: SAW}. A key idea behind these results is to construct bijections between certain collections of self-avoiding walks on the bunkbed graphs.



\section{Maximal flow on bunkbed graphs}\label{sec: maximal flow}

As stated in the introduction, our strategy for tackling the maximum flow problem is to first consider the $p$-resistance problem.

\subsection{$p$-resistance}
Given a finite, connected, and unoriented graph \( G = (V, E) \), we endow a nonnegative function \( r : E \to (0, \infty) \), and call \( r(e) \) the resistance of the edge \( e \). For such an electrical network \( (G, r) \), the effective resistance between two distinct vertices \( x, y \in V \) is defined as
\[
\reff(x, y) := \inf \left\{ \sum_{e \in E} r(e) \theta(e)^2 \colon \theta \text{ is a unit flow from } x \text{ to } y \right\}.
\]
We use the convention that $\reff(x,x)=0$.

Bollob\'{a}s and Brightwell \cite{Bollobas_Brightwell1997} proved that for bunkbed graphs \( G \times K_2 \), the effective resistance satisfies
\[
\reff(x_0, y_1) \geq \reff(x_0, y_0), \quad \text{for any } x, y \in V(G).
\]
A natural extension of the effective resistance is the following \( p \)-resistance for any fixed \( p > 1 \) (see, for instance, \cite{Soardi1994}, p. 176-178):
\[
\preff(x, y) := \inf \left\{ \sum_{e \in E} r(e) \theta(e)^p \colon \theta \text{ is a unit flow from } x \text{ to } y \right\}.
\]

Define
\[
\pceff(x, y) := \inf \left\{ \sum_{e = uv \in E} \frac{|f(u) - f(v)|^{1 + \frac{1}{p - 1}}}{r_e^{\frac{1}{p - 1}}} \colon f(x) - f(y) = 1 \right\},
\]
where the infimum is taken over all functions \( f : V \to \mathbb{R} \) such that \( f(x) - f(y) = 1 \). There is a useful dual formulation of the \( p \)-resistance, and the following version is given in Proposition 4 of \cite{Alamgir_Luxburg2011phase}.

\begin{lemma}\label{lem: dual form of p-resistance}
 For a fixed $p>1$ and a finite electrical network $(G,r)$, we have 
 \[
 \preff(x,y)=\bigg(\pceff(x,y)\bigg)^{-\frac{1}{p-1}}\,.
 \]
\end{lemma}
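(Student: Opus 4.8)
The plan is to prove the identity $\preff(x,y) = \big(\pceff(x,y)\big)^{-\frac{1}{p-1}}$ by exhibiting the two inequalities via a duality/optimality argument, using Lagrangian / H\"older considerations rather than invoking black-box convex duality. First I would introduce the conjugate exponent: write $q := 1 + \tfrac{1}{p-1} = \tfrac{p}{p-1}$, so that $\tfrac1p + \tfrac1q = 1$, and record the elementary reformulations $\preff(x,y) = \inf\{\sum_e r_e \theta(e)^p : \theta \text{ a unit flow } x\to y\}$ and $\pceff(x,y) = \inf\{\sum_{e=uv} r_e^{-1/(p-1)}|f(u)-f(v)|^q : f(x)-f(y)=1\}$. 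The first step is to argue that both infima are attained: the flow space restricted to unit flows from $x$ to $y$ is a nonempty (by connectedness) closed affine subspace, the functional $\theta\mapsto\sum_e r_e\theta(e)^p$ is continuous, strictly convex and coercive on it, so a unique minimizer $\theta^\ast$ exists; similarly, after quotienting by additive constants, $f\mapsto \sum_e r_e^{-1/(p-1)}|f(u)-f(v)|^q$ is continuous, convex and coercive on the affine slice $\{f(x)-f(y)=1\}$, so a minimizer $f^\ast$ exists.

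Next I would derive the Euler--Lagrange (variational) characterizations of the two minimizers. For the flow problem, perturbing $\theta^\ast$ by any cycle (element of the cycle space, i.e.\ any $\theta$ with $d^\ast\theta\equiv 0$) and setting the derivative to zero yields that the ``potential'' defined by $\psi(e) := p\, r_e\, \theta^\ast(\overrightarrow e)\,|\theta^\ast(\overrightarrow e)|^{p-2}$ is orthogonal to every cycle, hence is a gradient: there is $g:V\to\mathbb R$ with $\psi(e) = g(\overrightarrow e^-) - g(\overrightarrow e^+)$ for every oriented edge. Inverting this relation gives the ``Ohm's law'' $\theta^\ast(\overrightarrow e) = \big(r_e^{-1}\,[g(\overrightarrow e^-)-g(\overrightarrow e^+)]\big)^{\frac{1}{p-1}}$ (with the odd-power convention for signs). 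Symmetrically, the first-order condition for $f^\ast$ says that the current $\chi(e) := r_e^{-1/(p-1)}\,(f^\ast(\overrightarrow e^-)-f^\ast(\overrightarrow e^+))\,|f^\ast(\overrightarrow e^-)-f^\ast(\overrightarrow e^+)|^{q-2}$ satisfies Kirchhoff's node law $d^\ast\chi\equiv 0$ away from $x,y$, i.e.\ $\chi$ is a flow from $x$ to $y$; after normalizing its strength to $1$ one checks $\chi/\mathrm{Strength}(\chi)$ is the optimal flow and $f^\ast$ (suitably scaled) is the optimal potential, so the two optimizers are the \emph{same} physical configuration up to the explicit scaling dictated by $f^\ast(x)-f^\ast(y)=1$ versus $\mathrm{Strength}(\theta^\ast)=1$.

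With the matched optimizers in hand, the computation that closes the proof is a direct one: plug the Ohm-law relation into both objective functions. Writing $v_e := |g(\overrightarrow e^-)-g(\overrightarrow e^+)|$ for the optimal potential drops (before normalization), one gets $\sum_e r_e \theta^\ast(e)^p = \sum_e r_e^{-1/(p-1)} v_e^{p/(p-1)}$ and $\sum_e r_e\theta^\ast(e)^2 \cdot(\text{voltage across }x,y) $-type telescoping identities; carrying the normalization constants through shows $\preff(x,y)\cdot \pceff(x,y)^{1/(p-1)} = 1$, which is the claim. An alternative, cleaner route for the two inequalities avoids solving for the optimizer explicitly and instead uses H\"older / Young directly: for \emph{any} unit flow $\theta$ from $x$ to $y$ and \emph{any} $f$ with $f(x)-f(y)=1$, the telescoping identity $\sum_e \theta(\overrightarrow e)\,(f(\overrightarrow e^-)-f(\overrightarrow e^+)) = \mathrm{Strength}(\theta)\cdot(f(x)-f(y)) = 1$ combined with Young's inequality $ab \le \tfrac{a^p}{p} + \tfrac{b^q}{q}$ applied termwise (with $a = r_e^{1/p}\theta(e)$, $b = r_e^{-1/p}|f(\overrightarrow e^-)-f(\overrightarrow e^+)|$) yields $1 \le \tfrac1p\sum_e r_e\theta(e)^p + \tfrac1q \sum_e r_e^{-1/(p-1)}|f(\cdot)-f(\cdot)|^q$; optimizing the scaling of $f$ and then taking infima over $\theta$ and $f$ gives one inequality, and the reverse follows because Young's inequality is tight exactly when $b^q = a^p$, which is realizable precisely at the matched pair constructed above.

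The main obstacle I anticipate is not any single hard estimate but the bookkeeping around \emph{normalization and sign conventions}: the flow problem fixes $\mathrm{Strength}(\theta)=1$ while the dual problem fixes the potential difference $f(x)-f(y)=1$, and the nonlinear ``Ohm's law'' scales these two normalizations against each other by a nontrivial power, so one must track the constant carefully to land exactly on the exponent $-\tfrac{1}{p-1}$ rather than some other power of $p$. A secondary technical point is justifying attainment and differentiating the non-smooth objective $|t|^q$ at $t=0$ (equivalently, arguing the optimal flow/potential is nonzero on the relevant edges or handling zero edges separately); since $q>1$ the function $t\mapsto|t|^q$ is $C^1$, so this is routine, but it should be mentioned. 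Given that Lemma~\ref{lem: dual form of p-resistance} is quoted from \cite{Alamgir_Luxburg2011phase}, I would in the paper simply cite that reference for the proof and only sketch the Young-inequality argument above as motivation.
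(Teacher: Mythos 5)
The paper offers no proof of this lemma at all---it is quoted verbatim from Proposition~4 of \cite{Alamgir_Luxburg2011phase}---so there is no argument of the paper's to compare yours against; your Euler--Lagrange/Young-inequality duality sketch is the standard and structurally correct way to prove such a statement. However, your exponent bookkeeping does not close, and this is precisely the ``main obstacle'' you yourself flagged. Carry your own computation through: at the optimal unit flow $\theta^\ast$ the first-order condition gives a potential $g$ with $r_e|\theta^\ast(e)|^{p-1}=|g(\overrightarrow e^-)-g(\overrightarrow e^+)|$, whence $\sum_e r_e|\theta^\ast(e)|^p=g(x)-g(y)=\preff(x,y)$ and $\sum_e r_e^{-1/(p-1)}|dg(e)|^{q}=\preff(x,y)$ with $q=\tfrac{p}{p-1}$; rescaling to $f=g/\preff(x,y)$ (so $f(x)-f(y)=1$) multiplies the latter sum by $\preff(x,y)^{-q}$ and yields $\pceff(x,y)=\preff(x,y)^{1-q}=\preff(x,y)^{-1/(p-1)}$, i.e.
\[
\preff(x,y)=\Big(\pceff(x,y)\Big)^{-(p-1)},
\]
not $\big(\pceff(x,y)\big)^{-1/(p-1)}$. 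The same exponent falls out of your H\"older route, since $1\le\big(\sum_e r_e|\theta(e)|^p\big)^{1/p}\big(\sum_e r_e^{-1/(p-1)}|df(e)|^{q}\big)^{1/q}$ gives $\preff\ge\pceff^{-p/q}=\pceff^{-(p-1)}$. A one-edge network with resistance $r$ confirms it: there $\preff(x,y)=r$ and $\pceff(x,y)=r^{-1/(p-1)}$, and $r=\big(r^{-1/(p-1)}\big)^{-(p-1)}$, whereas the stated formula would force $r=r^{1/(p-1)^2}$. So your assertion that the calculation ``lands exactly on the exponent $-\tfrac{1}{p-1}$'' is not what the calculation produces; the lemma as printed appears to carry a reciprocal-exponent typo, and you should either correct the statement or explicitly note the discrepancy rather than asserting agreement. (The slip is harmless for the paper's purposes: the proof of Theorem~\ref{thm: p-resistance} only uses that $\preff$ is a decreasing function of $\pceff$, which holds for any negative exponent.) Two further minor points: the expression $\sum_e r_e\theta^\ast(e)^2$ in your third paragraph should have exponent $p$, and in the Young-inequality route one must also check that equality in the telescoping step ($\theta(e)\,df(e)=|\theta(e)|\,|df(e)|$ on every edge) is achieved at the matched pair, which it is because the optimal flow runs downhill along the optimal potential.
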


Our first result is an extension of the aforementioned inequality \( \reff(u_0, v_1) \geq \reff(u_0, v_0) \) on bunkbed graphs to the general case of \( p \)-resistance.
\begin{theorem}\label{thm: p-resistance}
Suppose \( (G \times K_2, r) \) is a finite electrical network on bunkbed graphs with reflection-symmetric resistance (i.e., \( r(e_0) = r(e_1) \) for all \( e \in E(G) \)) and \( p > 1 \) is a fixed parameter. Then, for any two distinct vertices \( x, y \in V(G) \),
\[
\preff(x_0, y_1) \geq \preff(x_0, y_0).
\]
	
\end{theorem}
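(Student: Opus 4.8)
The plan is to dualize via \cref{lem: dual form of p-resistance}, decompose functions along the reflection symmetry, and finish with a short convexity argument. First, since $\preff=\pceff^{-1/(p-1)}$ and $p>1$, the map $t\mapsto t^{-1/(p-1)}$ is strictly decreasing on $(0,\infty)$, so \cref{thm: p-resistance} is equivalent to $\pceff(x_0,y_1)\le\pceff(x_0,y_0)$ on $G\times K_2$. Put $q:=\tfrac{p}{p-1}=1+\tfrac1{p-1}>1$ and $\gamma_e:=r_e^{1/(p-1)}$; reflection symmetry gives $\gamma_{e_0}=\gamma_{e_1}=:\gamma_e$, and writing $E_q(f):=\sum_{ab\in E(G\times K_2)}\gamma_{ab}^{-1}|f(a)-f(b)|^q$ we have $\pceff(x_0,y_j)=\inf\{E_q(f):f(x_0)-f(y_j)=1\}$ for $j\in\{0,1\}$.

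Next I would exploit the reflection $\sigma\colon u_i\mapsto u_{1-i}$: any $f\colon V(G\times K_2)\to\mathbb R$ is determined by two functions $g,\phi\colon V(G)\to\mathbb R$ (its $\sigma$-even and $\sigma$-odd parts) via $f(u_0)=g(u)+\phi(u)$, $f(u_1)=g(u)-\phi(u)$. A per-edge computation, crucially using $\gamma_{e_0}=\gamma_{e_1}$, gives
\[
E_q(f)=\sum_{e=uv\in E(G)}\frac{|\alpha_e+\beta_e|^q+|\alpha_e-\beta_e|^q}{\gamma_e}\;+\;2^q\sum_{u\in V(G)}\frac{|\phi(u)|^q}{\gamma^{\mathrm{vert}}_u}\;=:\;\Phi_g(\phi),
\]
where $\alpha_e:=g(u)-g(v)$, $\beta_e:=\phi(u)-\phi(v)$, and moreover $f(x_0)-f(y_0)=(g(x)-g(y))+(\phi(x)-\phi(y))$ while $f(x_0)-f(y_1)=(g(x)-g(y))+(\phi(x)+\phi(y))$. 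Hence, with $C=C(g):=1-(g(x)-g(y))$,
\[
\pceff(x_0,y_0)=\inf_{g}\ \inf_{\phi:\ \phi(x)-\phi(y)=C}\Phi_g(\phi),\qquad \pceff(x_0,y_1)=\inf_{g}\ \inf_{\phi:\ \phi(x)+\phi(y)=C}\Phi_g(\phi),
\]
so it is enough to show, for each fixed $g$ and each $C\in\mathbb R$, that $\inf_{\phi(x)+\phi(y)=C}\Phi_g(\phi)\le\inf_{\phi(x)-\phi(y)=C}\Phi_g(\phi)$.

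To prove this, fix $g$ and write $\Phi:=\Phi_g$. One checks: (i) $\Phi$ is convex and coercive (the vertical terms dominate since $0<r^{\mathrm{vert}}_u<\infty$), hence attains its minimum on any affine subspace; (ii) $\Phi(-\phi)=\Phi(\phi)$, and since $|\alpha_e+\beta_e|^q+|\alpha_e-\beta_e|^q\ge 2|\alpha_e|^q$ with equality at $\beta_e=0$, the zero function is a global minimiser of $\Phi$; (iii) $\Phi(|\phi|)\le\Phi(\phi)$, where $|\phi|(u):=|\phi(u)|$, because the vertical terms only see the $|\phi(u)|$ while $z\mapsto|\alpha_e+z|^q+|\alpha_e-z|^q$ is even and non-decreasing in $|z|$ (being convex and even) and $\big||\phi(u)|-|\phi(v)|\big|\le|\phi(u)-\phi(v)|$. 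We may assume $C\ge 0$ (replace $\phi$ by $-\phi$, using (ii)). Let $\phi^{*}$ minimise $\Phi$ over $\{\phi(x)-\phi(y)=C\}$. By (iii), $\Phi(|\phi^{*}|)\le\Phi(\phi^{*})$, and $|\phi^{*}(x)|+|\phi^{*}(y)|\ge|\phi^{*}(x)-\phi^{*}(y)|=C$. If $C=0$ the zero function already works; otherwise set $\lambda:=C/(|\phi^{*}(x)|+|\phi^{*}(y)|)\in(0,1]$ and consider $\lambda|\phi^{*}|$, which satisfies $(\lambda|\phi^{*}|)(x)+(\lambda|\phi^{*}|)(y)=C$. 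By convexity and (ii), $\Phi(\lambda|\phi^{*}|)\le(1-\lambda)\Phi(0)+\lambda\Phi(|\phi^{*}|)\le\Phi(|\phi^{*}|)\le\Phi(\phi^{*})$. So $\lambda|\phi^{*}|$ is admissible for the ``$+$'' problem with no larger energy, which proves the claim; taking $\inf_g$ yields $\pceff(x_0,y_1)\le\pceff(x_0,y_0)$.

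The main obstacle — and the reason the obvious symmetrisations do not work — is that replacing $f$ by $f$, $f\circ\sigma$, $\min(f,f\circ\sigma)$ or $\max(f,f\circ\sigma)$ produces a function whose values on the $x$- and $y$-fibres are squeezed between the boundary values $0$ and $1$ by the maximum principle for the $q$-harmonic minimiser, so renormalising it back to $f(x_0)-f(y_j)=1$ increases the energy in the wrong direction. The decomposition sidesteps this because the difference between the $y_0$- and $y_1$-problems is confined to the antisymmetric coordinate $\phi$, on which $0$ is the global minimum of the (fixed-$g$) energy; contracting $\phi$ toward $0$ — precisely what is needed to move from $\phi(x)-\phi(y)=C$ to $\phi(x)+\phi(y)=C$ after taking absolute values — therefore \emph{lowers} the energy. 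The remaining work is the routine per-edge bookkeeping establishing the displayed formula for $E_q(f)$ and the verification of properties (i)–(iii) of $\Phi$.
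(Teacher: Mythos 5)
Your proof is correct, and after the common first step (dualizing via Lemma~\ref{lem: dual form of p-resistance} to reduce the claim to $\pceff(x_0,y_1)\le\pceff(x_0,y_0)$) it is at heart the same rearrangement as the paper's, just written in different coordinates: the paper replaces $f$ by $h$ with $h(u_0)=f(u_0)\vee f(u_1)$ and $h(u_1)=f(u_0)\wedge f(u_1)$, which in your even/odd decomposition $f(u_i)=g(u)+(-1)^i\phi(u)$ is exactly the map $\phi\mapsto|\phi|$ with $g$ untouched; and the per-edge inequality you need in your step (iii) --- that $z\mapsto|\alpha+z|^q+|\alpha-z|^q$ is even and convex, hence nondecreasing in $|z|$ --- is precisely Lemma~\ref{lem: an inequality for convex functions} specialized to $\varphi=|\cdot|^q$. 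The one genuine difference is how the boundary constraint is transported: the paper first truncates $f$ to $(f\wedge 1)\vee 0$, after which $h(x_0)=1$ and $h(y_1)=0$ hold automatically and no further adjustment is needed; you skip the truncation and instead pay for it with the scaling step $\phi^*\mapsto\lambda|\phi^*|$, using that $\phi\equiv 0$ is a global minimizer of $\Phi_g$ together with convexity along the segment to $0$. Both are valid; the truncation is shorter, while your variational phrasing makes transparent why shrinking the antisymmetric part can only lower the energy. One small correction to your closing remarks: the ``obvious'' levelwise max/min symmetrisation does work with no renormalisation once $f$ is truncated to $[0,1]$ --- that is exactly the paper's proof --- so the obstacle you describe there is not actually present.
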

We will need a lemma concerning convex functions.
\begin{lemma}\label{lem: an inequality for convex functions}
	Let $\varphi:\mathbb{R}\to\mathbb{R}$ be a convex function. Let $a_0,b_0,a_1,b_1$ be four arbitrary real numbers. Then, we have the following inequality: 
	\be\label{eq: an inequality for convex functions}
	\varphi(a_0-b_0)+\varphi(a_1-b_1)\ge \varphi(a_0\vee a_1-b_0\vee b_1)+\varphi(a_0\wedge a_1-b_0\wedge b_1)\,,
	\ee 
	where $a\vee b:=\max\{a,b\}$ and $a\wedge b:=\min\{a,b\}$.
	
\end{lemma}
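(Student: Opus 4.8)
The plan is to reduce \eqref{eq: an inequality for convex functions} to a one-variable statement about convex functions, exploiting a symmetry. Both sides of \eqref{eq: an inequality for convex functions} are unchanged if we simultaneously swap $a_0\leftrightarrow a_1$ and $b_0\leftrightarrow b_1$: the right-hand side because the four quantities $a_0\vee a_1$, $a_0\wedge a_1$, $b_0\vee b_1$, $b_0\wedge b_1$ are themselves invariant, and the left-hand side because it is symmetric in its two summands. Hence I may assume without loss of generality that $a_0\le a_1$, so that $a_0\vee a_1=a_1$ and $a_0\wedge a_1=a_0$. I then split into cases according to the order of $b_0$ and $b_1$. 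If $b_0\le b_1$, then $b_0\vee b_1=b_1$ and $b_0\wedge b_1=b_0$, so the right-hand side of \eqref{eq: an inequality for convex functions} equals $\varphi(a_1-b_1)+\varphi(a_0-b_0)$, matching the left-hand side term by term; the inequality holds with equality. The only remaining case is $b_0>b_1$, in which $b_0\vee b_1=b_0$ and $b_0\wedge b_1=b_1$, so I must prove
\be
\varphi(a_0-b_0)+\varphi(a_1-b_1)\ge \varphi(a_1-b_0)+\varphi(a_0-b_1)\,.
\ee

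To handle this case, set $x:=a_0-b_0$, $y:=a_1-b_1$, $u:=a_1-b_0$, and $v:=a_0-b_1$. A direct computation gives $x+y=u+v$. Moreover, using $a_0\le a_1$ one gets $u-x=a_1-a_0\ge 0$ and $y-v=a_1-a_0\ge 0$, while using $b_1<b_0$ one gets $v-x=b_0-b_1>0$ and $y-u=b_0-b_1>0$; altogether $x<y$ and both $u$ and $v$ lie in the closed interval $[x,y]$. Writing $u=(1-s)x+sy$ and $v=(1-t)x+ty$ with $s,t\in[0,1]$ and invoking convexity of $\varphi$ yields $\varphi(u)\le(1-s)\varphi(x)+s\varphi(y)$ and $\varphi(v)\le(1-t)\varphi(x)+t\varphi(y)$. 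Adding these two inequalities, and noting that $u+v=x+y$ together with $x<y$ forces $s+t=1$, we obtain $\varphi(u)+\varphi(v)\le\varphi(x)+\varphi(y)$, which is exactly the inequality needed in this case.

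There is no genuine obstacle here; the whole content is the elementary observation that, in the nontrivial case $b_0>b_1$, the pair $\{a_1-b_0,\,a_0-b_1\}$ is sandwiched inside the interval spanned by $\{a_0-b_0,\,a_1-b_1\}$ while having the same sum, after which the defining inequality of convexity finishes the argument. The only thing requiring care is organizing the case analysis so that this sandwiching is manifest; an equivalent, slightly slicker route is to note that $s\mapsto\varphi(x+s)+\varphi(y-s)$ is convex and symmetric about $s=(y-x)/2$, hence maximized on $[0,y-x]$ at the endpoints, but the convex-combination computation above avoids any smoothness assumption on $\varphi$.
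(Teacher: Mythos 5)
Your proof is correct and is essentially the same argument as the paper's: in the only nontrivial case (the two pairs oppositely ordered), you write the two ``crossed'' differences as convex combinations of the ``uncrossed'' ones with complementary weights and apply convexity, which is exactly the paper's computation with $\lambda$ and $1-\lambda$. The only cosmetic difference is that you use the swap symmetry $(a_0,b_0)\leftrightarrow(a_1,b_1)$ to cut the case analysis from four cases to two.
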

\begin{proof}
If \( a_0 = a_1 \) or \( b_0 = b_1 \), then equality obviously holds in \eqref{eq: an inequality for convex functions}. In the following, we assume \( a_0 \neq a_1 \) and \( b_0 \neq b_1 \), and we divide the analysis into four cases:
\begin{itemize}
	\item[(1)] If \( a_0 > a_1 \) and \( b_0 > b_1 \), then equality obviously holds in \eqref{eq: an inequality for convex functions}.
	\item[(2)] If \( a_0 < a_1 \) and \( b_0 < b_1 \), then equality also obviously holds in \eqref{eq: an inequality for convex functions}.
	\item[(3)] If \( a_0 > a_1 \) and \( b_0 < b_1 \), then with \( \lambda = \frac{a_0 - a_1}{(a_0 - a_1) + (b_1 - b_0)} \in (0, 1) \), we have
	\[
	a_0 - b_1 = \lambda(a_0 - b_0) + (1 - \lambda)(a_1 - b_1) \quad \text{and} \quad a_1 - b_0 = (1 - \lambda)(a_0 - b_0) + \lambda(a_1 - b_1).
	\]
	Since \( \varphi \) is convex, by Jensen's inequality, we obtain
	\[
	\varphi(a_0 - b_1) \leq \lambda \varphi(a_0 - b_0) + (1 - \lambda) \varphi(a_1 - b_1) \quad \text{and} \quad
	\varphi(a_1 - b_0) \leq (1 - \lambda) \varphi(a_0 - b_0) + \lambda \varphi(a_1 - b_1).
	\]
	Adding these two inequalities gives the desired inequality \eqref{eq: an inequality for convex functions} in this case.
	\item[(4)] If \( a_0 < a_1 \) and \( b_0 > b_1 \), then this case follows from case (3) by interchanging the roles of \( a_i, b_i \) for \( i \in \{0, 1\} \). \qedhere
\end{itemize}

\end{proof}

\begin{proof}[Proof of \cref{thm: p-resistance}]

In light of Lemma~\ref{lem: dual form of p-resistance}, it suffices to show that
\[
\pceff(x_0,y_1) \leq \pceff(x_0,y_0).
\]
In the definition of \( \pceff(x,y) \), we can assume the infimum is taken over functions \( f \) such that \( f(x) = 1 \) and \( f(y) = 0 \). Furthermore, we can also assume that the functions \( f \) take values in \( [0, 1] \). This is due to the observation that one can modify a function \( f \) to the function \( (f \wedge 1) \vee 0 \) without increasing the corresponding summation
\[
\sum_{e=uv \in E} \frac{|f(u) - f(v)|^{1 + \frac{1}{p-1}}}{r_e^{\frac{1}{p-1}}}.
\]

Now, to prove \( \pceff(x_0, y_1) \leq \pceff(x_0, y_0) \), it suffices to show that for any function \( f: V(G \times K_2) \to [0, 1] \) such that \( f(x_0) = 1 \) and \( f(y_0) = 0 \), one can find a function \( h: V(G \times K_2) \to [0, 1] \) such that
\begin{itemize}
	\item[(a)] \( h(x_0) = 1 \) and \( h(y_1) = 0 \), 
	\item[(b)] \( |h(u_0) - h(u_1)| = |f(u_0) - f(u_1)| \) for any \( u \in V(G) \), and 
	\item[(c)] for any edge \( e = uv \in E(G) \), 
	\[
	|f(u_0) - f(v_0)|^{1 + \frac{1}{p-1}} + |f(u_1) - f(v_1)|^{1 + \frac{1}{p-1}}
	\geq |h(u_0) - h(v_0)|^{1 + \frac{1}{p-1}} + |h(u_1) - h(v_1)|^{1 + \frac{1}{p-1}}.
	\]
\end{itemize}

Our choice of \( h: V(G \times K_2) \to [0, 1] \) is given as follows: for any \( u \in V(G) \),
\[
h(u_0) := f(u_0) \vee f(u_1), \quad \text{and} \quad h(u_1) := f(u_0) \wedge f(u_1).
\]
Now, the requirements (a) and (b) obviously hold for our choice of \( h \). Next, we observe that the function \( \varphi: \mathbb{R} \to \mathbb{R} \) defined by \( \varphi(x) = |x|^q \), where \( q = 1 + \frac{1}{p-1} > 1 \), is convex. The requirement (c) then follows immediately from Lemma~\ref{lem: an inequality for convex functions} with \( a_0 = f(u_0), a_1 = f(u_1), b_0 = f(v_0), b_1 = f(v_1) \).
\end{proof}

\subsection{Proof of \cref{thm: maximal flow on bunkbed}}

The key idea here is the following well-known linear programming formulation of the maximum flow problem. 
\begin{lemma}\label{lem: lpf maxflow}
Suppose $G = (V, E)$ is a finite and connected graph, and $c: E \to (0, \infty)$ is a capacity function. Let $x, y \in V(G)$ be two distinct vertices of $G$. Then the maximum flow between $x$ and $y$ satisfies
\be\label{eq: lpf maxflow}
\mathbf{MF}(x, y) = \min \left\{ \sum_{e = uv \in E} c(e) | f(u) - f(v) | : f(x) = 1, f(y) = 0 \right\}\,.
\ee

\end{lemma}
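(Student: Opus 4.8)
The plan is to combine the max-flow min-cut theorem with a threshold-rounding argument showing that the real-valued minimization on the right-hand side of \eqref{eq: lpf maxflow} is attained by a $\{0,1\}$-valued function, and that such functions are exactly the indicators of cuts. First I would recall the max-flow min-cut theorem for finite networks with finite capacities: $\mathbf{MF}(x,y)$ equals the minimum, over all vertex subsets $S$ with $x\in S$ and $y\notin S$, of the cut capacity $\mathrm{cap}(S):=\sum_{e=uv\in E,\,u\in S,\,v\notin S}c(e)$. Given such an $S$, the indicator $f=\mathbf{1}_S$ satisfies $f(x)=1$, $f(y)=0$ and $\sum_{e=uv\in E}c(e)|f(u)-f(v)|=\mathrm{cap}(S)$; conversely every $f\colon V\to\{0,1\}$ with $f(x)=1$, $f(y)=0$ arises from $S=f^{-1}(1)$. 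Hence $\mathbf{MF}(x,y)=\min\{\sum_{e=uv\in E}c(e)|f(u)-f(v)|:f\colon V\to\{0,1\},\,f(x)=1,\,f(y)=0\}$. Since $\{0,1\}$-valued functions are in particular real-valued, the right-hand side of \eqref{eq: lpf maxflow} is at most this quantity, so only the reverse inequality remains.

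For the reverse inequality, fix any $f\colon V\to\mathbb{R}$ with $f(x)=1$, $f(y)=0$. As observed in the proof of \cref{thm: p-resistance}, replacing $f$ by $(f\wedge 1)\vee 0$ preserves the boundary values and does not increase $\sum_{e=uv\in E}c(e)|f(u)-f(v)|$, so we may assume $f\colon V\to[0,1]$. For $t\in(0,1)$ set $f_t:=\mathbf{1}_{\{u\,:\,f(u)\ge t\}}$, a $\{0,1\}$-valued function with $f_t(x)=1$ and $f_t(y)=0$. For each edge $e=uv$ the values $f_t(u),f_t(v)$ differ precisely when $t$ lies in the half-open interval with endpoints $f(u)$ and $f(v)$, so $\int_0^1|f_t(u)-f_t(v)|\,dt=|f(u)-f(v)|$. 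Summing with weights $c(e)$ and applying Fubini gives $\int_0^1\big(\sum_{e=uv\in E}c(e)|f_t(u)-f_t(v)|\big)\,dt=\sum_{e=uv\in E}c(e)|f(u)-f(v)|$, hence there is a threshold $t^*\in(0,1)$ with $\sum_{e=uv\in E}c(e)|f_{t^*}(u)-f_{t^*}(v)|\le\sum_{e=uv\in E}c(e)|f(u)-f(v)|$. Since $f_{t^*}$ competes in the $\{0,1\}$-valued problem, this yields the reverse inequality and hence \eqref{eq: lpf maxflow}; finiteness and attainment of the minimum follow from compactness of $[0,1]^{V}$, continuity of the objective, and finiteness of the capacities.

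The only substantive step is the threshold-rounding/coarea identity $\int_0^1|f_t(u)-f_t(v)|\,dt=|f(u)-f(v)|$ together with the averaging argument extracting a single good level $t^*$; everything else is an invocation of max-flow min-cut plus the routine dictionary between cuts and $\{0,1\}$-valued potentials.
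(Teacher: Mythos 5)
Your proof is correct, and it takes a genuinely different route from the paper's. The paper first assumes integer capacities, replaces each edge $e$ by $c(e)$ parallel unit-capacity edges, invokes Menger's theorem to identify the min cut with the maximum number of edge-disjoint paths, and then passes to rational capacities by scaling and to real capacities by a continuity argument that is left to the reader. You instead go straight through max-flow min-cut plus a threshold-rounding (coarea) argument: after truncating $f$ to $[0,1]$ you use the identity $\int_0^1 |f_t(u)-f_t(v)|\,dt = |f(u)-f(v)|$ for $f_t=\mathbf{1}_{\{f\ge t\}}$ and an averaging step to produce a $\{0,1\}$-valued competitor that is no worse than $f$. This handles arbitrary positive real capacities in one stroke, so it dispenses with both the parallel-edge reduction and the omitted limiting argument; it also cleanly justifies that the right-hand side of \eqref{eq: lpf maxflow} is a genuine minimum (attained at a cut indicator). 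The only ingredients you share with the paper are the max-flow min-cut theorem and the truncation observation already used in the proof of \cref{thm: p-resistance}. All steps check out: $f_t(x)=1$ and $f_t(y)=0$ for every $t\in(0,1)$, the half-open interval on which $f_t(u)\neq f_t(v)$ has length exactly $|f(u)-f(v)|$, and Fubini applies since everything is finite and nonnegative. This is, if anything, a more complete argument than the one in the paper.
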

\begin{proof}
We sketch a proof for the readers' convenience. First, we assume that $c(e) \in \mathbb{N}$ for every edge $e \in E$. Let each edge $e \in E$ be replaced by $c(e)$ parallel edges with capacity $1$, resulting in a new graph $G'$. In the new graph $G'$, every edge has capacity $1$. By the max-flow min-cut theorem, it suffices to show that in $G'$, the following identity holds:
\[
\min \# \left\{ \text{edge-cut set separating } x \text{ from } y \right\} = \min \left\{ \sum_{e = uv \in E(G')} |f(u) - f(v)| : f(x) = 1, f(y) = 0 \right\}\,.
\]
On the one hand, recall Menger's theorem: the minimum edge-cut set separating $x$ from $y$ equals the maximum number of edge-disjoint paths from $x$ to $y$. Along each edge-disjoint path $\gamma$ from $x$ to $y$, the sum of the terms $\sum_{e = uv \in \gamma} |f(u) - f(v)|$ is at least $1$ by the triangle inequality. Hence, we have
\[
\min \# \left\{ \text{edge-cut set separating } x \text{ from } y \right\} \leq \min \left\{ \sum_{e = uv \in E(G')} |f(u) - f(v)| : f(x) = 1, f(y) = 0 \right\}\,.
\]
On the other hand, let $\Pi$ be a minimal (with respect to inclusion) edge-cut set separating $x$ from $y$, and let $S$ be the connected component containing $x$. Set $f(u) = \mathbf{1}_{u \in S}$. Using this particular $f$, we obtain the other direction:
\[
\min \# \left\{ \text{edge-cut set separating } x \text{ from } y \right\} \geq \min \left\{ \sum_{e = uv \in E(G')} |f(u) - f(v)| : f(x) = 1, f(y) = 0 \right\}\,.
\]
Combining the two directions, we conclude that \eqref{eq: lpf maxflow} holds in the case of integer-valued capacities.

Next, observe that if we multiply every edge capacity by the same constant $\alpha$, then the two sides of \eqref{eq: lpf maxflow} will also be multiplied by $\alpha$. Thus, the case of rational-valued capacities follows from the integer-valued case and this observation.

Finally, the general case with real-valued capacities can be proved by a continuity argument, which we omit here.
\end{proof}

\begin{proof}[Proof of \cref{thm: maximal flow on bunkbed}]
The desired inequality $\mathbf{MF}(x_0, y_0) \geq \mathbf{MF}(x_0, y_1)$  follows from Lemma~\ref{lem: lpf maxflow} and the following inequality:
\begin{align*}
&\min\Big\{ \sum_{e = uv \in E(G \times K_2)} c(e) |f(u) - f(v)| : f(x_0) = 1, f(y_0) = 0 \Big\} \\
\ge & \min\Big\{ \sum_{e = uv \in E(G \times K_2)} c(e) |f(u) - f(v)| : f(x_0) = 1, f(y_1) = 0 \Big\}\,,
\end{align*}
which can be proved in the same way as in the proof of \cref{thm: p-resistance}, by using the convex function $\varphi(x) = |x|$ instead.
\end{proof}

\section{Self-avoiding walks on bunkbed graphs}\label{sec: SAW}

Consider self-avoiding walks on a bunkbed graph $G \times K_2$, where $G$ is a finite, connected, simple graph. We first construct bijections between certain collections of self-avoiding walks. 

Fix two distinct vertices $u, v \in V(G)$ and denote the two vertical edges by $e_u = u_0u_1$ and $e_v = v_0v_1$. We then examine the following subsets of $\mathscr{S}(u_0, v_0)$ and $\mathscr{S}(u_0, v_1)$.

\begin{enumerate}
	\item Let $\mathscr{S}_1(u_0,v_0)$ be the subset of walks in $\mathscr{S}(u_0,v_0)$ that do not pass through $v_1$, written as
	\[
	\mathscr{S}_1(u_0,v_0):=\{w \in \mathscr{S}(u_0,v_0) \colon u_0\rightsquigarrow v_0 \textnormal{ not via }v_1 \}\,.
	\]
	Let  $\mathscr{S}_1(u_0,v_1)$ be the subset of walks in $\mathscr{S}(u_0,v_1)$ that
	use the edge $e_v=v_0v_1$ as their last step, written as 
	\[
	\mathscr{S}_1(u_0,v_1):=\{ w \in \mathscr{S}(u_0,v_1) \colon u_0\rightsquigarrow v_0\oplus v_1 \}.
	\]
	
	\item Let $\mathscr{S}_2(u_0,v_0)$ be the subset of walks in $\mathscr{S}(u_0,v_0)$ that use $e_v$ as their last step, written as 
	\[
		\mathscr{S}_2(u_0,v_0):=\{w \in \mathscr{S}(u_0,v_0) \colon u_0\rightsquigarrow v_1\oplus v_0  \}.
	\]
	 Let $\mathscr{S}_2(u_0,v_1)$ be the subset of walks in $\mathscr{S}(u_0,v_1)$ that do not pass through $v_0$, written as
	\[
	\mathscr{S}_2(u_0,v_1):=\{w \in \mathscr{S}(u_0,v_0) \colon u_0\rightsquigarrow v_1 \textnormal{ not via }v_0 \}.
	\]  
	
	\item Let $\mathscr{S}_3(u_0,v_0)$ be the subset of walks in $\mathscr{S}(u_0,v_0)$ that pass though $v_1$ but do not pass through $u_1$ or $e_v$, written as 
	\[
	\mathscr{S}_3(u_0,v_0):=\{w \in \mathscr{S}(u_0,v_0) \colon u_0\rightsquigarrow v_1\rightsquigarrow v_0 \textnormal{ not via }u_1,e_v \}.
	\]
	Let  $\mathscr{S}_3(u_0,v_1)$ be the subset of walks in $\mathscr{S}(u_0,v_1)$ that
	use the edge $e_u=u_0u_1$ as their first step and pass though $v_0$ but do not pass through $e_v$, written as 
	\[
	\mathscr{S}_3(u_0,v_1):=\{ w \in \mathscr{S}(u_0,v_1) \colon u_0\oplus u_1\rightsquigarrow v_0\rightsquigarrow v_1  \textnormal{ not via }e_v\}.
	\]
	
	\item Let $\mathscr{S}_4(u_0,v_0)$ be the subset of walks in $\mathscr{S}(u_0,v_0)$ that use edge $e_u=u_0u_1$  as their first step and pass through $v_1$ but not through $e_v$, written as 
	\[
	\mathscr{S}_4(u_0,v_0):=\{w \in \mathscr{S}(u_0,v_0) \colon u_0\oplus u_1\rightsquigarrow v_1\rightsquigarrow v_0 \textnormal{ not via }e_v\}.
	\]
	
	Let $\mathscr{S}_4(u_0,v_1)$ be the subset of walks in $\mathscr{S}(u_0,v_1)$ that pass through $v_0$ but not  through $u_1$ or $e_v$, written as 
	\[
	\mathscr{S}_4(u_0,v_1):=\{w \in \mathscr{S}(u_0,v_1) \colon u_0\rightsquigarrow v_0\rightsquigarrow v_1  \textnormal{ not via }u_1,e_v\}.
	\]

	\item Let  $\mathscr{S}_5(u_0,v_0)$ be the subset of walks in $\mathscr{S}(u_0,v_0)$ that pass through $u_1$ and $v_1$ but not through the vertical edges $e_u$ or $e_v$, written as 
	\[
	\mathscr{S}_5(u_0,v_0):=\{w \in \mathscr{S}(u_0,v_0) \colon u_0\rightsquigarrow u_1,v_1\rightsquigarrow v_0  \textnormal{ not via } e_u,e_v \}.
	\]
	
	Let $\mathscr{S}_5(u_0,v_1)$ be the subset of walks in $\mathscr{S}(u_0,v_1)$ that pass through $u_1$ and $v_0$ but not through the vertical edges $e_u$ or $e_v$, written as 
	\[
	\mathscr{S}_5(u_0,v_1):=\{w \in \mathscr{S}(u_0,v_1) \colon u_0\rightsquigarrow u_1,v_0\rightsquigarrow v_1  \textnormal{ not via } e_u,e_v \}.
	\]
	Note that in $\mathscr{S}_5(u_0,v_0)$, both possibilities of $u_0\rightsquigarrow u_1\rightsquigarrow v_1\rightsquigarrow v_0 $ and $u_0\rightsquigarrow v_1\rightsquigarrow u_1\rightsquigarrow v_0 $ are allowed. Similarly, both possibilities are allowed for $\mathscr{S}_5(u_0,v_1)$.
\end{enumerate}

It is straightforward to verify that for \( j = 1, 2 \), the sets \( \mathscr{S}_i(u_0, v_j),\,i\in\{1,\ldots,5\} \) are disjoint, and \( \mathscr{S}(u_0, v_j) \) is their union, i.e.,  
\[
\mathscr{S}(u_0, v_j) = \bigcup_{i=1}^{5} \mathscr{S}_i(u_0, v_j).
\]

\begin{lemma}\label{lem: bijections between some subsets of SAW}
For \( i = 1, 2, 3, 4 \), there exists a bijection between \( \mathscr{S}_i(u_0, v_0) \) and \( \mathscr{S}_i(u_0, v_1) \).	
\end{lemma}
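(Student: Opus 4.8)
The plan is to exhibit each of the four bijections explicitly by describing how a walk is transformed, then checking that the transformation lands in the target set and is invertible. The guiding principle is that switching the level of a vertex $w_i$ between $0$ and $1$ in the bunkbed graph is harmless along horizontal edges (since $c(e_0)=c(e_1)$, or here since the edge exists on both levels), so a suitable suffix or prefix of a self-avoiding walk can be ``flipped'' wholesale from one level to the other.

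For $i=1$: given $w\in\mathscr{S}_1(u_0,v_0)$, the walk reaches $v_0$ without visiting $v_1$, so simply append the vertical edge $e_v=v_0v_1$ at the end to obtain a walk ending $\dots\rightsquigarrow v_0\oplus v_1$; this lands in $\mathscr{S}_1(u_0,v_1)$, and deleting the last edge is the inverse. For $i=2$ the situation is symmetric: a walk in $\mathscr{S}_2(u_0,v_0)$ uses $e_v$ as its last step, so it has the form $u_0\rightsquigarrow v_1\oplus v_0$; delete the final vertical edge to land in $\mathscr{S}(u_0,v_1)$, and this walk does not pass through $v_0$ (the original was self-avoiding and $v_0$ was only its last vertex), hence lies in $\mathscr{S}_2(u_0,v_1)$; appending $e_v$ is the inverse. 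For $i=3$: a walk in $\mathscr{S}_3(u_0,v_0)$ has the form $u_0\rightsquigarrow v_1\rightsquigarrow v_0$, using neither $u_1$ nor $e_v$; write it as the concatenation of an initial segment $\alpha$ from $u_0$ to $v_1$ and a final segment $\beta$ from $v_1$ to $v_0$. Since $\beta$ avoids $e_v$ and avoids $u_1$, and the graph is a bunkbed, reflecting $\beta$ to the other level produces a walk $\beta'$ from $v_0$ to $v_1$; but one must first prepend $e_u$ to $\alpha$ — actually the cleaner recipe is: reflect the portion of $w$ strictly between its two visits... Here I would instead flip the whole walk in the region where it is ``on the wrong side.'' Concretely, take $w\in\mathscr{S}_3(u_0,v_0)$, insert $e_u$ as a new first step landing at $u_1$, then reflect the entire original walk to level $1$ up until it first reaches $v_1$-reflected, i.e. $v_0$; the tail after that point stays. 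The image has first step $e_u$, passes through $v_0$, and avoids $e_v$, so it lies in $\mathscr{S}_3(u_0,v_1)$. Case $i=4$ is handled by the same flipping recipe with the roles of the prefix/suffix swapped, mapping $\mathscr{S}_4(u_0,v_0)$ (which starts $u_0\oplus u_1\rightsquigarrow v_1\rightsquigarrow v_0$) to $\mathscr{S}_4(u_0,v_1)$ (which is $u_0\rightsquigarrow v_0\rightsquigarrow v_1$ avoiding $u_1,e_v$) by removing the initial $e_u$ and reflecting the appropriate segment.

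The main obstacle — and the step that deserves the most care — is the $i=3$ and $i=4$ cases, where the bijection is not just ``attach/detach a vertical edge'' but a genuine reflection of a sub-walk. One must verify (i) that the reflected sub-walk remains self-avoiding, which follows because reflection is a graph automorphism of $G\times K_2$ and the constraints ``not via $u_1$'' / ``not via $e_v$'' guarantee the reflected piece does not collide with the unreflected piece at the gluing vertex $u_1$ or along $e_v$; (ii) that no vertex is visited twice across the seam between reflected and unreflected parts — this is exactly why the definitions forbid passing through $u_1$ and $e_v$; and (iii) that the constructed walk lands in the correct $\mathscr{S}_i$ on the other side, with all the ``first step $e_u$'', ``passes through $v_0$'', ``not via $e_v$'' conditions matched up. I would organize the argument by fixing, for each walk, the canonical decomposition into the segment before and after the first visit to the ``special'' vertex ($v_1$ in $\mathscr{S}_3(u_0,v_0)$), applying the level-reflection to exactly one segment, and then checking the resulting walk satisfies the defining clauses of the target set; invertibility is immediate because the same decomposition/reflection, applied to the image, recovers the original. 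I would also note that $\mathscr{S}_5$ is deliberately excluded from the lemma — the bijections break there because both $u$ and $v$ are ``crossed'' without their vertical edges, so neither the attach/detach nor the single reflection trick applies, and this is precisely where the surplus on the $v_1$ side (or the failure of the conjectured inequality, in the ladder case) comes from.
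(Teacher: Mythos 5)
Your treatment of $i=1,2$ matches the paper's (attach or detach the vertical edge $e_v$) and is fine. The trouble is in $i=3,4$. In your closing paragraph you commit to ``applying the level-reflection to exactly one segment'' of the decomposition at the first visit to the special vertex, and your $i=3$ recipe likewise reflects the walk only up to the point where it first reaches the reflection of $v_1$, keeping the tail. This construction does not produce a walk at all: writing $w=\alpha\cdot\beta$ with $\alpha\colon u_0\rightsquigarrow v_1$ and $\beta\colon v_1\rightsquigarrow v_0$, the reflection of $\alpha$ ends at $v_0$ while the unreflected $\beta$ begins at $v_1$, so the two pieces can only be joined by inserting $e_v$ --- which the target set $\mathscr{S}_3(u_0,v_1)$ explicitly forbids --- and the endpoint would in any case be $v_0$ rather than $v_1$. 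Even if the seam could be repaired, reflecting only one segment can destroy self-avoidance at vertices other than $u$ and $v$: if $\alpha$ visits $x_0$ and $\beta$ visits $x_1$ for some third vertex $x$, the reflected piece collides with the unreflected one, and the hypotheses ``not via $u_1$, $e_v$'' do nothing to prevent this. Your checks (i)--(ii) only guard the seam at $u_1$ and along $e_v$, which is not where the danger lies.

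The construction that works --- and the one the paper uses --- reflects the \emph{entire} walk, so that there is no seam and self-avoidance is automatic because reflection is a graph automorphism of $G\times K_2$. For $i=3$: reflect all of $w\in\mathscr{S}_3(u_0,v_0)$ to obtain a self-avoiding walk from $u_1$ through $v_0$ to $v_1$ avoiding $u_0$ and $e_v$, then prepend $e_u$; the clause ``not via $u_1$'' in the source set is precisely what guarantees the reflected walk avoids $u_0$, so the prepended edge does not break self-avoidance. For $i=4$: delete the initial edge $e_u$ and reflect the entire remainder. With this repair, your verification of membership in the target sets and of invertibility goes through as you describe.
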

\begin{proof}
	We construct the corresponding maps and leave it to the reader to verify that they are indeed bijections.
	
	For \( i = 1,2 \), the map consists of simply adding or deleting the edge \( e_v \).
	
	For \( i = 3 \), given a path \( w \in \mathscr{S}_3(u_0, v_0) \), perform the following operation to obtain a path in \( \mathscr{S}_3(u_0, v_1) \): first, reflect the path \( w \) to obtain a path from \( u_1 \) to \( v_0 \) to \( v_1 \) that does not pass through \( u_0 \) or the edge \( e_v \), and then attach at the beginning the edge \( e_u \).
	
	For \( i = 4 \), given a path \( w \in \mathscr{S}_4(u_0, v_0) \), perform the following operation to obtain a path in \( \mathscr{S}_4(u_0, v_1) \): first, remove the initial edge \( e_u \), and then reflect the resulting path.
	\end{proof}

\subsection{Proof of Proposition~\ref{prop: SAW on ladder}}

\begin{proof}[Proof of Proposition \ref{prop: SAW on ladder}]
The cases \( n = 2,3 \) are straightforward to verify and are therefore omitted. We now assume \( n \geq 4 \).

By Lemma \ref{lem: bijections between some subsets of SAW}, it suffices to compare \( \mathscr{S}_5(u_0, v_0) \) with \( \mathscr{S}_5(u_0, v_1) \).

\begin{enumerate}
	\item[(a)] 
If either \( u \) or \( v \) is an endpoint of \( P_n \), then both \( \mathscr{S}_5(u_0, v_0) \) and \( \mathscr{S}_5(u_0, v_1) \) are clearly empty. Hence, by Lemma \ref{lem: bijections between some subsets of SAW}, it follows that  
\[
\big| \mathscr{S}(u_0, v_0) \big| = \big| \mathscr{S}(u_0, v_1) \big|
\]
in such cases.

	\item[(b)] 	
If neither \( u \) nor \( v \) is an endpoint of \( P_n \) and \( u \sim v \), say \( u = k \) and \( v = k+1 \) for some \( k \in [1, n-2] \), then it is straightforward to verify that \( \mathscr{S}_5(u_0, v_1) \) is empty, while  
\[
\big| \mathscr{S}_5(u_0, v_0) \big| = k(n - k - 1) > 0.
\]
Thus, by Lemma \ref{lem: bijections between some subsets of SAW}, it follows that  
\[
\big| \mathscr{S}(u_0, v_0) \big| > \big| \mathscr{S}(u_0, v_1) \big|
\]
in such cases.

	\item[(c)] 
If neither \( u \) nor \( v \) is an endpoint of \( P_n \) and \( u \not\sim v \), then without loss of generality, we assume \( u = k \) and \( v = k + m \) for some \( 1 \leq k < k + m \leq n - 1 \) and \( m \geq 2 \).  

If \( m = 2 \), then it is straightforward to verify (see Fig. \ref{fig: SAW on ladder 1}) that  
\[
\big| \mathscr{S}_5(u_0, v_0) \big| = \big| \mathscr{S}_5(u_0, v_1) \big| = k(n - k - 2).
\]  

If \( m > 2 \), then it is similarly straightforward to verify (see Fig. \ref{fig: SAW on ladder 2}) that  
\[
\big| \mathscr{S}_5(u_0, v_0) \big| = k(n - k - m) f_0(m), \quad  
\big| \mathscr{S}_5(u_0, v_1) \big| = k(n - k - m) f_1(m),
\]
where \( f_i(m) \) represents the number of self-avoiding walks from \( (0,0) \) to \( (m-2,i) \) in \( P_{m-2} \times K_2 \). By Case (a) above, we have \( f_0(m) = f_1(m) \). Hence,  
\[
\big| \mathscr{S}_5(u_0, v_0) \big| = \big| \mathscr{S}_5(u_0, v_1) \big|.
\]  
Thus, by Lemma \ref{lem: bijections between some subsets of SAW}, it follows that  
\[
\big| \mathscr{S}(u_0, v_0) \big| = \big| \mathscr{S}(u_0, v_1) \big|
\]
in such cases.

	\begin{figure*}[h!]
		\centering
		
		\begin{tikzpicture}[scale=0.8, text height=1.5ex,text depth=.25ex]

		\draw [help lines,dashed] (0,0) grid (11,1);
		\draw [help lines,dashed] (0,4) grid (11,5);
			
		\draw[fill=black] (5,4) circle [radius=0.05]; 
		\node[below] at (5,4-0.1) {$u_0$};
		\draw[fill=black] (5,5) circle [radius=0.05]; 
		\node[above] at (5,5+0.1) {$u_1$};
		
		\draw[fill=black] (7,4) circle [radius=0.05]; 
		\node[below] at (7,4-0.1) {$v_0$};

		\draw[fill=black] (7,5) circle [radius=0.05]; 
		\node[above] at (7,5+0.1) {$v_1$};
		
		\draw[color=red,thick] (5,4)--(2,4)--(2,5)--(9,5)--(9,4)--(7,4);

		\draw[fill=black] (5,0) circle [radius=0.05]; 
		\node[below] at (5,-0.1) {$u_0$};
		\draw[fill=black] (5,1) circle [radius=0.05]; 
		\node[above] at (5,1+0.1) {$u_1$};
		
		\draw[fill=black] (7,0) circle [radius=0.05]; 
		\node[below] at (7,-0.1) {$v_0$};

		\draw[fill=black] (7,1) circle [radius=0.05]; 
		\node[above] at (7,1+0.1) {$v_1$};

		\draw[color=blue,thick] (5,0)--(2,0)--(2,1)--(5,1)--(6,1)--(6,0)--(9,0)--(9,1)--(7,1);

		\end{tikzpicture}
		\caption{Typical paths in $\mathscr{S}_5(u_0,v_0)$ (red) and $\mathscr{S}_5(u_0,v_1)$ (blue) in the case $v=u+2$.}
		\label{fig: SAW on ladder 1}
	\end{figure*}
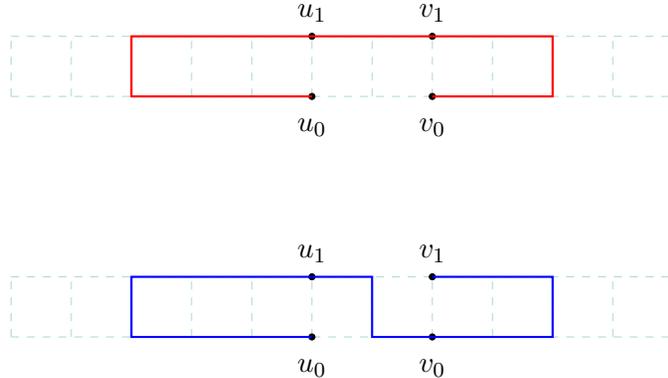	 
	 
	 	\begin{figure*}[h!]
	 	\centering
	 	
	 	\begin{tikzpicture}[scale=0.8, text height=1.5ex,text depth=.25ex]

	 	\draw [help lines,dashed] (0,0) grid (15,1);
	 	\draw [help lines,dashed] (0,4) grid (15,5);
	 	
	 	\draw[fill=black] (5,4) circle [radius=0.05]; 
	 	\node[below] at (5,4-0.1) {$u_0$};
	 	\draw[fill=black] (5,5) circle [radius=0.05]; 
	 	\node[above] at (5,5+0.1) {$u_1$};
	 	
	 	\draw[fill=black] (10,4) circle [radius=0.05]; 
	 	\node[below] at (10,4-0.1) {$v_0$};

	 	\draw[fill=black] (10,5) circle [radius=0.05]; 
	 	\node[above] at (10,5+0.1) {$v_1$};
	 	
	 	\draw[fill=black] (6,5) circle [radius=0.05]; 
	 	\draw[fill=black] (9,5) circle [radius=0.05]; 
	 		\draw[fill=black] (6,4) circle [radius=0.05]; 
	 	\draw[fill=black] (9,4) circle [radius=0.05]; 
	 	\draw[color=red,thick] (5,4)--(2,4)--(2,5)--(6,5);
	 	\draw[color=black,thick] (6,5)--(6,4)--(8,4)--(8,5)--(9,5);
	 	\draw[color=red,thick](9,5)--(13,5)--(13,4)--(10,4);

	 	\draw[fill=black] (5,0) circle [radius=0.05]; 
	 	\node[below] at (5,-0.1) {$u_0$};
	 	\draw[fill=black] (5,1) circle [radius=0.05]; 
	 	\node[above] at (5,1+0.1) {$u_1$};
	 	
	 	\draw[fill=black] (10,0) circle [radius=0.05]; 
	 	\node[below] at (10,-0.1) {$v_0$};

	 	\draw[fill=black] (10,1) circle [radius=0.05]; 
	 	\node[above] at (10,1+0.1) {$v_1$};
	 	
	 		\draw[fill=black] (6,1) circle [radius=0.05]; 
	 	\draw[fill=black] (9,1) circle [radius=0.05]; 
	 		\draw[fill=black] (6,0) circle [radius=0.05]; 
	 	\draw[fill=black] (9,0) circle [radius=0.05]; 
	 	\draw[color=blue,thick] (5,0)--(2,0)--(2,1)--(5,1)--(6,1); 
	 	\draw[color=black,thick] (6,1)--(7,1)--(7,0)--(8,0)--(8,1)--(9,1)--(9,0);
	 	\draw[color=blue,thick] (9,0)--(13,0)--(13,1)--(10,1);

	 	\end{tikzpicture}
	 	\caption{Typical paths in $\mathscr{S}_5(u_0,v_0)$ (red+black+red) and $\mathscr{S}_5(u_0,v_1)$ (blue+black+blue) in the case $v>u+2$.}
	 	\label{fig: SAW on ladder 2}
	 \end{figure*}
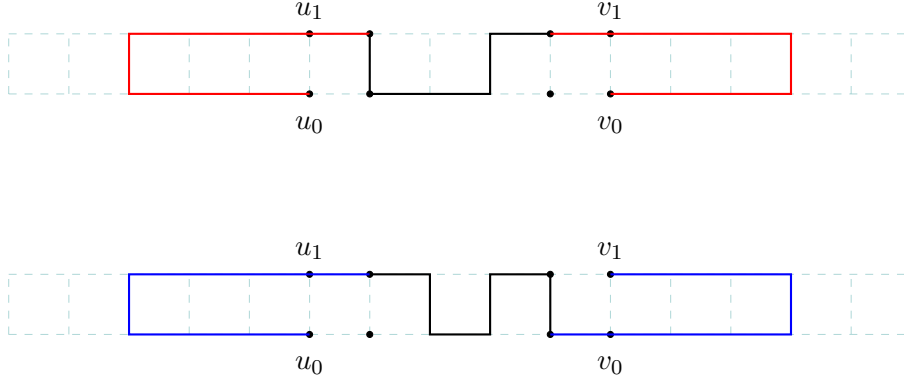	 
	 
\end{enumerate}

\end{proof}

\begin{remark}\label{rem: self-avoiding walk}
From the proof of Proposition \ref{prop: SAW on ladder}, we obtain the following results for self-avoiding walks on \( G \times K_2 \), where \( G \) is a finite, connected, simple graph:

\begin{enumerate}
	\item If \( \min\{\deg_G(u), \deg_G(v)\} = 1 \), then \( \big| \mathscr{S}_5(u_0, v_0) \big| = \big| \mathscr{S}_5(u_0, v_1) \big| = 0 \), and hence \( \big| \mathscr{S}(u_0, v_0) \big| = \big| \mathscr{S}(u_0, v_1) \big| \). In particular, this applies to the case where \( G \) is a star graph.
	
	\item If \( e = (u, v) \) is a cut-edge of \( G \) with \( \deg_G(u), \deg_G(v) \geq 2 \), then \( \big| \mathscr{S}_5(u_0, v_0) \big| \geq 1 > 0 = \big| \mathscr{S}_5(u_0, v_1) \big| \), and hence \( \big| \mathscr{S}(u_0, v_0) \big| > \big| \mathscr{S}(u_0, v_1) \big| \).
\end{enumerate}

\end{remark}

From Remark~\ref{rem: self-avoiding walk} we observe that cut edges may pose an obstacle to \cref{ques: SAW on bunkbed} having an affirmative answer, and we do not know whether this is the only type of obstacle.

\begin{question}\label{ques: modified ques for SAW on bunkbed}
	Consider self-avoiding walks on a bunkbed graph \( G \times K_2 \), where \( G \) is a finite, connected, simple graph. Does the inequality
	\[
	\big| \mathscr{S}(u_0, v_0) \big| \leq \big| \mathscr{S}(u_0, v_1) \big|
	\]
	hold whenever \( u \neq v \in V(G) \) such that \( (u, v) \) is not a cut-edge of \( G \)?
\end{question}

\subsection{Proof of Theorem~\ref{thm: SAW on complete}}
Recall that \( \mathscr{S}_5(u_0, v_i) \) denotes the subset of self-avoiding walks on the bunkbed graph from \( u_0 \) to \( v_i \), using \( u_1 \) and \( v_{1-i} \), but not passing through the two vertical edges \( u_0 u_1 \) and \( v_0 v_1 \).

\begin{lemma}\label{lem: formula for the number of SAW on complete graphs times K2}
Consider self-avoiding walks on the bunkbed graph \( K_n \times K_2 \). Define \( A_n := \big| \mathscr{S}_5(u_0, v_0) \big| \) and \( B_n := \big| \mathscr{S}_5(u_0, v_1) \big| \).

For \( n \geq 3 \), the following expressions hold:
\begin{eqnarray}\label{eq: number of SAW from u0 to v0 in S5}
A_n &=& \sum_{k=1}^{\lfloor \frac{n-2}{2} \rfloor} (2k)! \binom{n-2}{2k} \Bigg[ \sum_{t=0}^{n-2k-2} \frac{(n-2k-2)!}{(n-2k-2-t)!} \binom{t+k}{k} \nonumber \\
&& \quad \quad \times \sum_{t=0}^{n-2k-2} \frac{(n-2k-2)!}{(n-2k-2-t)!} \binom{t+k-1}{k-1} (t+k)(t+k+1) \Bigg],
\end{eqnarray}
and
\begin{equation}\label{eq: number of SAW from u0 to v1 in S5}
B_n = \sum_{k=0}^{\lfloor \frac{n-3}{2} \rfloor} (2k+1)! \binom{n-2}{2k+1} \left[ \sum_{t=0}^{n-2k-3} \frac{(n-2k-3)!}{(n-2k-3-t)!} \binom{t+k}{k} (t+k+1) \right]^2,
\end{equation}
where we adopt the conventions \( \sum_{k=1}^{0} = 0 \) and \( \binom{0}{0} = 1 \).

\end{lemma}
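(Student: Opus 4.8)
The plan is to exhibit, for each $j\in\{0,1\}$, a bijection that cuts a walk in $\mathscr{S}_5(u_0,v_j)$ into monochromatic pieces at the vertical edges it uses, and then to count the reconstructions by elementary composition arguments. Throughout write $V(K_n)=\{u,v\}\cup\{w_1,\dots,w_{n-2}\}$ and call the $w_i$ the \emph{middle} vertices; note that the only vertical edges available to a walk in $\mathscr{S}_5$ are those at middle vertices, since $e_u=u_0u_1$ and $e_v=v_0v_1$ are forbidden.

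First I would record the \emph{crossing data}. For $w\in\mathscr{S}_5(u_0,v_0)$ let $C\subseteq\{w_1,\dots,w_{n-2}\}$ be the set of middle vertices whose vertical edge is traversed by $w$, listed as $c_1,\dots,c_{|C|}$ in the order those vertical edges are used. Since $w$ starts and ends in layer $0$ it crosses an even number of times, $|C|=2k$ with $k\ge1$; for $w\in\mathscr{S}_5(u_0,v_1)$ it crosses an odd number of times, $|C|=2k+1$ with $k\ge0$ — this is the origin of the two outer sums. Cutting $w$ at each crossing produces consecutive self-avoiding segments $S_0,S_1,\dots$, with $S_i$ lying entirely in layer $i\bmod 2$. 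For $\mathscr{S}_5(u_0,v_0)$ this gives $k+1$ layer-$0$ segments, with endpoint pairs $\{u,c_1\},\{c_2,c_3\},\dots,\{c_{2k},v\}$, and $k$ layer-$1$ segments, with endpoint pairs $\{c_1,c_2\},\dots,\{c_{2k-1},c_{2k}\}$; the mandatory vertices $u_1,v_1$ must then lie in the interiors of the layer-$1$ segments. For $\mathscr{S}_5(u_0,v_1)$ one gets $k+1$ layer-$0$ and $k+1$ layer-$1$ segments, with $v_0$ forced into a layer-$0$ interior and $u_1$ into a layer-$1$ interior. The key structural point — where completeness of $K_n$ enters — is that this decomposition is a bijection onto all admissible configurations: any sequence of distinct vertices is a path in $K_n$, so one only needs the same-layer segments to be pairwise vertex-disjoint and the $c_i$ distinct; and, crucially, a middle vertex $w\notin C$ may have \emph{both} lifts $w_0,w_1$ used (these are distinct vertices of $K_n\times K_2$ with no vertical edge between them in the walk), so the layer-$0$ interiors and layer-$1$ interiors are chosen \emph{independently} from the common pool of $n-2-|C|$ non-crossing middle vertices. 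This independence is precisely why the brackets in \eqref{eq: number of SAW from u0 to v0 in S5} and \eqref{eq: number of SAW from u0 to v1 in S5} are \emph{products} of two sums, and the prefactor $(2k)!\binom{n-2}{2k}=\tfrac{(n-2)!}{(n-2-2k)!}$ (resp.\ $(2k+1)!\binom{n-2}{2k+1}$) counts the ordered tuple $(c_1,c_2,\dots)$.

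It then remains to count, for a fixed ordered crossing tuple, the completions in each layer. In a layer with $m$ available non-crossing middle vertices and $s$ segments, one chooses an ordered list of $t$ of them ($\tfrac{m!}{(m-t)!}$ ways), splits it among the $s$ segments (a weak composition of $t$ into $s$ parts, $\binom{t+s-1}{s-1}$ ways), and finally inserts the mandatory vertices of that layer one at a time, each insertion offering (current number of interior vertices) $+$ (number of segments) $+1$ positions. For $\mathscr{S}_5(u_0,v_0)$: layer $0$ has $s=k+1$ and no insertion (the endpoints $u_0,v_0$ are fixed), giving $\sum_t \tfrac{(n-2k-2)!}{(n-2k-2-t)!}\binom{t+k}{k}$; layer $1$ has $s=k$ and two insertions $u_1,v_1$, contributing the factor $(t+k)(t+k+1)$ — which automatically accounts for both orders $u_0\rightsquigarrow u_1\rightsquigarrow v_1\rightsquigarrow v_0$ and $u_0\rightsquigarrow v_1\rightsquigarrow u_1\rightsquigarrow v_0$ — giving $\sum_t \tfrac{(n-2k-2)!}{(n-2k-2-t)!}\binom{t+k-1}{k-1}(t+k)(t+k+1)$. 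Multiplying the three factors and summing over $1\le k\le\lfloor\frac{n-2}{2}\rfloor$ yields \eqref{eq: number of SAW from u0 to v0 in S5}. For $\mathscr{S}_5(u_0,v_1)$ the non-crossing pool has size $n-2k-3$, each layer has $s=k+1$ segments and exactly one insertion ($v_0$ in layer $0$, $u_1$ in layer $1$), each contributing a factor $t+k+1$; the two layer-factors therefore coincide and the bracket becomes a perfect square, giving \eqref{eq: number of SAW from u0 to v1 in S5}.

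The routine part is the composition count; the part requiring care is the bijection claim. I expect the main obstacle to be verifying rigorously that every reconstructed walk is genuinely self-avoiding (same-layer disjointness together with the fact that each $(c_i)_0,(c_i)_1$ is then used exactly once, at a joint), that $C$ is recovered as precisely the set of traversed vertical edges, that distinct data produce distinct walks, and that the fixed and inserted special vertices $u_0,v_0,u_1,v_1$ never coincide with any segment endpoint — the last point holding exactly because $u,v\notin C$, so $u$ and $v$ never appear as one of the $c_i$ and are available to serve only as the prescribed outer endpoints or inserted interior vertices.
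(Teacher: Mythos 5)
Your argument is correct and essentially identical to the paper's own proof: both decompose a walk in $\mathscr{S}_5$ at its vertical crossings, count the ordered crossing tuple via $(2k)!\binom{n-2}{2k}$ (resp.\ $(2k+1)!\binom{n-2}{2k+1}$), fill the same-layer segments by choosing ordered vertex lists and weak compositions from the \emph{common} pool of non-crossing middle vertices independently in each layer, and insert the mandatory vertices to obtain the factors $(t+k)(t+k+1)$ and $(t+k+1)^2$. (One cosmetic slip: the generic insertion-position count should be ``current interior vertices plus number of segments,'' without the extra $+1$, but the actual factors you write down are the correct ones, matching the paper.)
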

\begin{proof}
	
	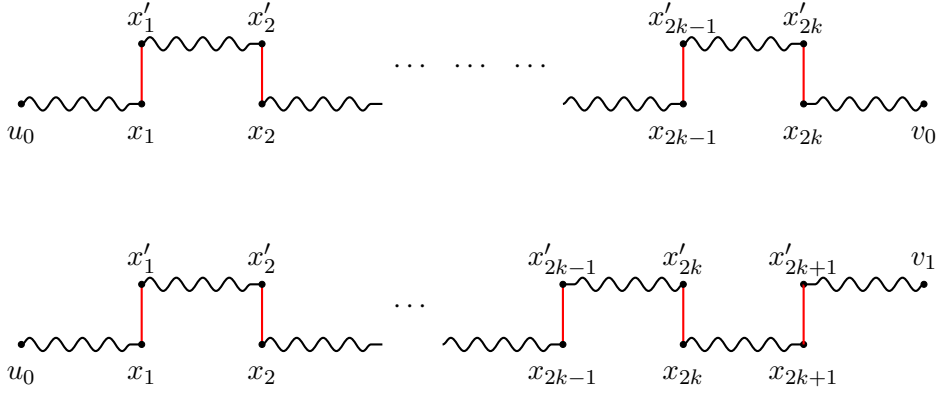
\begin{figure*}[h!]
		\centering
		
		\begin{tikzpicture}[scale=0.8, text height=1.5ex,text depth=.25ex]

		
		\draw[fill=black] (0,4) circle [radius=0.05]; 
		\node[below] at (0,4-0.1) {$u_0$};

		\draw[fill=black] (15,4) circle [radius=0.05]; 
		\node[below] at (15,4-0.1) {$v_0$};

		\node[below] at (6.5,5){$\cdots$};
		\node[below] at (7.5,5){$\cdots$};
		\node[below] at (8.5,5){$\cdots$};
		
		\draw[color=black,thick,snake=snake] (0,4)--(2,4);
		\draw[color=red,thick] (2,4)--(2,5);
		\draw[fill=black] (2,4) circle [radius=0.05]; 
		\node[below] at (2,4-0.1) {$x_1$};
		\draw[fill=black] (2,5) circle [radius=0.05]; 
		\node[above] at (2,5+0.1) {$x_1'$};

		\draw[color=black,thick,snake=snake] (2,5)--(4,5);
		\draw[color=red,thick] (4,5)--(4,4);
		\draw[fill=black] (4,4) circle [radius=0.05]; 
		\node[below] at (4,4-0.1) {$x_2$};
		\draw[fill=black] (4,5) circle [radius=0.05]; 
		\node[above] at (4,5+0.1) {$x_2'$};

		\draw[color=black,thick,snake=snake] (4,4)--(6,4);
		\draw[color=black,thick,snake=snake] (9,4)--(11,4);
		\draw[color=red,thick] (11,5)--(11,4);
		\draw[fill=black] (11,4) circle [radius=0.05]; 
		\node[below] at (11,4-0.1) {$x_{2k-1}$};
		\draw[fill=black] (11,5) circle [radius=0.05]; 
		\node[above] at (11,5+0.1) {$x_{2k-1}'$};

		\draw[color=black,thick,snake=snake] (11,5)--(13,5);	
		
		\draw[color=red,thick] (13,5)--(13,4);	
		
		\draw[fill=black] (13,4) circle [radius=0.05]; 
		\node[below] at (13,4-0.1) {$x_{2k}$};
		\draw[fill=black] (13,5) circle [radius=0.05]; 
		\node[above] at (13,5+0.1) {$x_{2k}'$};	
		
		\draw[color=black,thick,snake=snake] (15,4)--(13,4);

		\draw[fill=black] (0,0) circle [radius=0.05]; 
		\node[below] at (0,0-0.1) {$u_0$};

		\node[below] at (6.5,1){$\cdots$};

		\draw[color=black,thick,snake=snake] (0,0)--(2,0);
		\draw[color=red,thick] (2,0)--(2,1);
		\draw[fill=black] (2,0) circle [radius=0.05]; 
		\node[below] at (2,-0.1) {$x_1$};
		\draw[fill=black] (2,1) circle [radius=0.05]; 
		\node[above] at (2,1+0.1) {$x_1'$};

		\draw[color=black,thick,snake=snake] (2,1)--(4,1);
		\draw[color=red,thick] (4,1)--(4,0);
		\draw[fill=black] (4,0) circle [radius=0.05]; 
		\node[below] at (4,-0.1) {$x_2$};
		\draw[fill=black] (4,1) circle [radius=0.05]; 
		\node[above] at (4,1+0.1) {$x_2'$};

		\draw[color=black,thick,snake=snake] (4,0)--(6,0);
		\draw[color=black,thick,snake=snake] (7,0)--(9,0);
		\draw[color=red,thick] (9,1)--(9,0);
		\draw[fill=black] (9,0) circle [radius=0.05]; 
		\node[below] at (9,-0.1) {$x_{2k-1}$};
		\draw[fill=black] (9,1) circle [radius=0.05]; 
		\node[above] at (9,1+0.1) {$x_{2k-1}'$};

		\draw[color=black,thick,snake=snake] (11,1)--(9,1);	
		
		\draw[color=red,thick] (11,1)--(11,0);	
		
		\draw[fill=black] (11,0) circle [radius=0.05]; 
		\node[below] at (11,-0.1) {$x_{2k}$};
		\draw[fill=black] (11,1) circle [radius=0.05]; 
		\node[above] at (11,1+0.1) {$x_{2k}'$};	
		
		\draw[color=black,thick,snake=snake] (11,0)--(13,0);	
		
		\draw[fill=black] (13,0) circle [radius=0.05]; 
		\node[below] at (13,-0.1) {$x_{2k+1}$};
		\draw[fill=black] (13,1) circle [radius=0.05]; 
		\node[above] at (13,1+0.1) {$x_{2k+1}'$};	
		
		\draw[color=red,thick] (13,1)--(13,0);		
		
		\draw[color=black,thick,snake=snake] (15,1)--(13,1);	
		
		\draw[fill=black] (15,1) circle [radius=0.05]; 
		\node[above] at (15,1+0.1) {$v_1$};

		\end{tikzpicture}
		\caption{A systematic illustration of typical paths in $\mathscr{S}_5(u_0,v_0)$  and $\mathscr{S}_5(u_0,v_1)$.}
		\label{fig: SAW on Kn times K2}
	\end{figure*}

	We prove the formula \eqref{eq: number of SAW from u0 to v0 in S5}, with the other formula \eqref{eq: number of SAW from u0 to v1 in S5} being similar and omitted.

First, for a path \( w \in \mathscr{S}_5(u_0, v_0) \), it must use an even number of vertical edges, say \( 2k \) vertical edges. Since \( u_1, v_1 \in w \), we must have \( k \geq 1 \). Clearly, \( 2k \leq n-2 \), so \( 1 \leq k \leq \left\lfloor \frac{n-2}{2} \right\rfloor \). 
Fix the number of vertical edges. There are \( \binom{n-2}{2k} \) ways to choose the endpoints in the bottom copy of \( K_n \) for these vertical edges, and \( (2k)! \) ways to arrange them in order, say \( x_1, \ldots, x_{2k} \). Let \( x_i' \) denote the corresponding endpoints in the top copy. These contribute to the term \( (2k)! \binom{n-2}{2k} \) in \eqref{eq: number of SAW from u0 to v0 in S5}.

	Now, fixing the number of vertical edges and their endpoints as described above, we proceed to count the number of ways to connect them in a self-avoiding manner, as illustrated in Fig.~\ref{fig: SAW on Kn times K2}.

For \( i \in \{0, 1, \ldots, k\} \), let \( j_i \) denote the number of points in the segment from \( x_{2i} \) to \( x_{2i+1} \) in \( w \) (excluding \( x_{2i} \) and \( x_{2i+1} \)). Moreover, we define \( x_0 = u_0 \) and \( x_{2k+1} = v_0 \). Then, we have \( j_i \geq 0 \) and \( t := \sum_{i=0}^{k} j_i \leq n - 2 - 2k \).
There are \( \binom{n - 2k - 2}{j_0, \ldots, j_k, n - 2k - 2 - t} \) ways to choose the  \( j_0, \ldots, j_k \)  distinct vertices from the remaining \( n - 2k - 2 \) vertices (i.e., \( V(K_n) \setminus \{ u_0, v_0, x_1, \ldots, x_{2k} \} \)), and \( \prod_{i=0}^{k} (j_i)! \) ways to arrange them in order. These contribute the term
\[
\sum_{\substack{j_0, \ldots, j_k \geq 0 \\ j_0 + \cdots + j_k \leq n - 2k - 2}} \binom{n - 2k - 2}{j_0, \ldots, j_k, n - 2k - 2 - t} \cdot \prod_{i=0}^{k} (j_i)! = \sum_{t=0}^{n - 2k - 2} \frac{(n - 2k - 2)!}{(n - 2k - 2 - t)!} \binom{t + k}{k},
\]
where we use the fact that
\[
\big| \{ (j_0, \ldots, j_k) : j_i \geq 0, j_0 + \cdots + j_k = t \} \big| = \binom{t+k}{k}.
\]

For the segments in the top copy, the situation is similar. Let \( j_i' \) denote the number of points (distinct from \( u_1, v_1 \)) in the segment from \( x_{2i-1}' \) to \( x_{2i}' \) for \( i = 1, \ldots, k \). Then, as before, there are
\[
\sum_{\substack{j_1', \ldots, j_k' \geq 0 \\ j_1' + \cdots + j_k' \leq n - 2k - 2}} \binom{n - 2k - 2}{j_1', \ldots, j_k', n - 2k - 2 - t} \cdot \prod_{i=1}^{k} (j_i')! = \sum_{t=0}^{n - 2k - 2} \frac{(n - 2k - 2)!}{(n - 2k - 2 - t)!} \binom{t + k - 1}{k - 1},
\]
ways of choosing these points and arranging them in order. Finally, we need to place \( u_1 \) and \( v_1 \) into some of the segments. Clearly, there are \( (j_1' + 1) + \cdots + (j_k' + 1) = t + k \) ways to insert \( u_1 \). After inserting \( u_1 \), there are \( t + k + 1 \) ways to insert \( v_1 \). Hence, combining all these considerations, we obtain the final term
\[
\sum_{t=0}^{n - 2k - 2} \frac{(n - 2k - 2)!}{(n - 2k - 2 - t)!} \binom{t + k - 1}{k - 1} (t + k)(t + k + 1)
\]
in \eqref{eq: number of SAW from u0 to v0 in S5}.
\end{proof}

\begin{lemma}\label{lem: number of SAW estimates}
Consider self-avoiding walks on a bunkbed graph \( K_n \times K_2 \) and define \( A_n := \left|\mathscr{S}_5(u_0, v_0)\right| \), \( B_n := \left|\mathscr{S}_5(u_0, v_1)\right| \). Then, the following asymptotic behavior holds:

\begin{equation}\label{eq: asymptotic formula for A_n}
A_n = \left(1 + o(1)\right) e^2 \cdot \sum_{m=0}^{\infty} \frac{1}{(m!)^2(m+1)} \cdot (n-3) \big[(n-2)!\big]^2
\end{equation}
and
\begin{equation}\label{eq: asymptotic formula for B_n}
B_n = \left(1 + o(1)\right) e^2 \cdot \sum_{m=0}^{\infty} \frac{1}{(m!)^2} \cdot (n-2) \big[(n-2)!\big]^2
\end{equation}

In particular, there exists \( N > 0 \) such that \( A_n < B_n \) for all \( n \geq N \).

\end{lemma}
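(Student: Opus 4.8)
The plan is to extract the leading asymptotics of $A_n$ and $B_n$ directly from the exact formulas \eqref{eq: number of SAW from u0 to v0 in S5} and \eqref{eq: number of SAW from u0 to v1 in S5} of \cref{lem: formula for the number of SAW on complete graphs times K2}, by a dominated-convergence argument over the summation index $k$, and then to compare the two resulting series. First I would tidy up the summands using the identities $(2k)!\binom{n-2}{2k}=\frac{(n-2)!}{(n-2k-2)!}$ and $(2k+1)!\binom{n-2}{2k+1}=\frac{(n-2)!}{(n-2k-3)!}$, so that the $k$-th term of $A_n$ is $\frac{(n-2)!}{M!}\,\sigma_k(M)\,\tau_k(M)$ with $M=n-2k-2$, where $\sigma_k(M)=\sum_{t=0}^{M}\frac{M!}{(M-t)!}\binom{t+k}{k}$ and $\tau_k(M)=\sum_{t=0}^{M}\frac{M!}{(M-t)!}\binom{t+k-1}{k-1}(t+k)(t+k+1)$, while the $k$-th term of $B_n$ is $\frac{(n-2)!}{M!}\,\rho_k(M)^2$ with $M=n-2k-3$ and $\rho_k(M)=\sum_{t=0}^{M}\frac{M!}{(M-t)!}\binom{t+k}{k}(t+k+1)$.

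Next I would analyze these inner sums through the substitution $j=M-t$, which rewrites each of $\sigma_k,\tau_k,\rho_k$ in the form $M!\sum_{j=0}^{M}\frac1{j!}\,(\text{a product of a few factors linear in }M-j)$. Two inputs are needed. For \emph{fixed} $k$: dividing by $M!$ and the appropriate power of $M$ and letting $M\to\infty$, the sum tends to $\sum_{j\ge0}\frac1{j!}=e$ by dominated convergence in $j$ (using the $j$-uniform bound $\binom{M-j+\ell}{\ell}\le\binom{M+\ell}{\ell}$), giving $\sigma_k(M)=(1+o(1))e\,M!\,M^{k}/k!$, $\tau_k(M)=(1+o(1))e\,M!\,M^{k+1}/(k-1)!$ and $\rho_k(M)=(1+o(1))e\,M!\,M^{k+1}/k!$. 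For a bound valid uniformly over \emph{all} admissible $k$: bounding $\binom{t+\ell}{\ell}\le\binom{M+\ell}{\ell}$, $(t+k)(t+k+1)\le(M+k)(M+k+1)$ and $\sum_j\frac1{j!}\le e$ yields $\sigma_k(M)\le e\,\frac{(M+k)!}{k!}$, $\tau_k(M)\le e\,(M+k+1)\frac{(M+k)!}{(k-1)!}$ and $\rho_k(M)\le e\,(M+k+1)\frac{(M+k)!}{k!}$; then substituting $M+k\le n$ and telescoping the factorial ratios — the crucial elementary facts being $\frac{[(n-k-2)!]^2}{(n-2)!\,(n-2k-2)!}=\prod_{i=1}^{k}\frac{n-k-1-i}{n-1-i}\le 1$ and its analogue $\frac{[(n-k-3)!]^2}{(n-2)!\,(n-2k-3)!}\le\frac1{n-k-2}$ — shows that the $k$-th summand of $A_n$ is at most $\frac{e^2}{k!\,(k-1)!}\,n\,[(n-2)!]^2$ and that of $B_n$ is at most $\frac{Ce^2}{(k!)^2}\,n\,[(n-2)!]^2$ for an absolute constant $C$, with both coefficient sequences summable in $k$.

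Combining the two inputs, the $k$-th summand of $A_n$ divided by $n\,[(n-2)!]^2$ converges as $n\to\infty$ to $\frac{e^2}{k!\,(k-1)!}=\frac{e^2}{(m+1)(m!)^2}$ with $m=k-1$ (after simplifying the ratios of factorials and using $n-3\sim n$), and is dominated by the summable sequence $\frac{e^2}{k!\,(k-1)!}$; dominated convergence for series then gives $A_n=(1+o(1))e^2\big(\sum_{m\ge0}\frac1{(m+1)(m!)^2}\big)(n-3)[(n-2)!]^2$, i.e.\ \eqref{eq: asymptotic formula for A_n}. The identical argument for $B_n$, whose normalized $k$-th summand tends to $\frac{e^2}{(k!)^2}$, gives \eqref{eq: asymptotic formula for B_n}. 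Since $\frac1{(m+1)(m!)^2}<\frac1{(m!)^2}$ for every $m\ge1$, we have $\sum_{m\ge0}\frac1{(m+1)(m!)^2}<\sum_{m\ge0}\frac1{(m!)^2}$, whence $\frac{A_n}{B_n}\to\big(\sum_m\frac1{(m+1)(m!)^2}\big)\big/\big(\sum_m\frac1{(m!)^2}\big)<1$, which yields $A_n<B_n$ for all sufficiently large $n$.

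I expect the main obstacle to be the uniform estimate in the second paragraph: producing an $n$-uniform, summable-in-$k$ bound on the normalized summands across the \emph{whole} range $1\le k\le\lfloor\frac{n-2}{2}\rfloor$ (resp.\ $0\le k\le\lfloor\frac{n-3}{2}\rfloor$), including the crossover regime $k\asymp n$ where the factorial factors are no longer comparable to powers of $n$; replacing factorials by powers of $n$ there is far too lossy, so one must keep the factorial ratios intact and verify the telescoping inequalities above. This is bookkeeping rather than a conceptual hurdle, but it is where the care is needed. (If only the qualitative conclusion $A_n<B_n$ is wanted, one can instead isolate the dominant $k=1$ term of $A_n$ and the $k\in\{0,1\}$ terms of $B_n$ and bound the rest crudely; but the asymptotic statements \eqref{eq: asymptotic formula for A_n}--\eqref{eq: asymptotic formula for B_n} as written still require the uniform control.)
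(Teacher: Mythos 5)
Your proposal is correct and follows essentially the same strategy as the paper: estimate each inner sum over $t$ as $e(1+o(1))$ times its largest term, obtain the fixed-$k$ asymptotics of the $k$-th summand, and justify interchanging the limit with the sum over $k$ via an $n$-uniform summable bound. The only difference is cosmetic — you control the full range of $k$ by direct termwise factorial (telescoping) bounds, whereas the paper does it through the ratio estimates $q_{k+1}/q_k \le e^2/(k+1)^2$ and $p_{k+1}/p_k \le e^2/(k(k+1))$; both yield the same domination.
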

\begin{proof}
	Write 
	\begin{align*}
	a_{t,k}&=\frac{(n-2k-2)!}{(n-2k-2-t)!}\binom{t+k}{k}\\
	b_{t,k}&=\frac{(n-2k-2)!}{(n-2k-2-t)!}\binom{t+k-1}{k-1}(t+k)(t+k+1)\\
	c_{t,k}&=\frac{(n-2k-3)!}{(n-2k-3-t)!}\binom{t+k}{k}(t+k+1).
	\end{align*}
	and 
	\begin{align*}
	p_{k}&=(2k)!\binom{n-2}{2k}\sum_{t=0}^{n-2k-2}a_{t,k}\sum_{t=0}^{n-2k-2}b_{t,k}\\
	q_k&=(2k+1)!\binom{n-2}{2k+1}\bigg[\sum_{t=0}^{n-2k-3}c_{t,k}\bigg]^2.
	\end{align*}
	Note that the quantities $a_{t,k},b_{t,k},c_{t,k},p_k$ and $q_k$ depends on $n$. 
	
	Then, by Lemma \ref{lem: formula for the number of SAW on complete graphs times K2}, one has
	\[
	A_n = \sum_{k=1}^{\left\lfloor \frac{n-2}{2} \right\rfloor} p_k
	\quad \text{and} \quad
	B_n = \sum_{k=0}^{\left\lfloor \frac{n-3}{2} \right\rfloor} q_k.
	\]
To establish the asymptotic behavior in this lemma, we rely on the following key estimates:
\begin{itemize}
    	\item[(i)] For a fixed $k$, as $n\to\infty$, we have 
    	\be\label{eq: sum of ck}
    	\sum_{t=0}^{n-2k-3}c_{t,k}=e(1+o(1))\cdot c_{n-2k-3,k}\,.
    	\ee 
    	Similarly 
    	\be\label{eq: sum of ak and bk}
    	\sum_{t=0}^{n-2k-2}a_{t,k}=e(1+o(1))\cdot a_{n-2k-2,k}\,,
    \quad \quad 
    		\sum_{t=0}^{n-2k-2}b_{t,k}=e(1+o(1))\cdot b_{n-2k-2,k}\,.
    	\ee

    	\item[(ii)] 
    	 For any $k$ and $n$ satisfying $0\le k\leq \frac{n-5}{2}$, we have 
    	\be\label{eq: qk ratio bound}
    	\frac{q_{k+1}}{q_k}\leq \frac{e^2}{(k+1)^2}\,,
    	\ee
    	Moreover, for fixed $k\ge 0$, as $n\to\infty$,  we have 
    	\be\label{eq: qk ratio asym}
    	\frac{q_{k+1}}{q_k}=(1+o(1))\frac{1}{(k+1)^2}\,.
    	\ee

    	\item[(iii)]
    	For any $k$ and $n$ satisfying $1\le k\leq \frac{n-4}{2}$, we have 
    	\be\label{eq: pk ratio bound}
    	\frac{p_{k+1}}{p_k}\leq \frac{e^2}{k(k+1)}\,,
    	\ee
    	Moreover, for fixed $k\ge 1$, as $n\to\infty$, we have 
    	\be\label{eq: pk ratio asym}
    	\frac{p_{k+1}}{p_k}=(1+o(1))\frac{1}{k(k+1)}\,.
    	\ee
    	
    \end{itemize}

We first establish the asymptotic behavior given in \eqref{eq: asymptotic formula for A_n} and \eqref{eq: asymptotic formula for B_n}, assuming these estimates  \eqref{eq: sum of ck}---\eqref{eq: pk ratio asym} hold. 

For any fixed $\epsilon > 0$, by \eqref{eq: qk ratio bound}, there exist sufficiently large constants $N_1$ and  $M$, depending only on \( \epsilon \), such that:  
\begin{itemize}
	\item \(\displaystyle \sum_{k=0}^{M} \frac{1}{(k!)^2} \geq (1-\epsilon) \sum_{k=0}^{\infty} \frac{1}{(k!)^2},\)  
	\item for any \( n \geq N_1 \), we have  
	\[
	\sum_{k=0}^{M} q_k \leq B_n \leq (1+\epsilon) \sum_{k=0}^{M} q_k\,.
	\]
\end{itemize}

Next, by \eqref{eq: qk ratio asym}, there exists \( N_2 \) such that for \( n \geq N_2 \), we have  
\[
(1-\epsilon) q_0 \sum_{k=0}^{M} \frac{1}{(k!)^2} \leq \sum_{k=0}^{M} q_k \leq (1+\epsilon) q_0 \sum_{k=0}^{M} \frac{1}{(k!)^2}.
\]
Thus, for \( n \geq \max(N_1, N_2) \), it follows that  
\[
(1-\epsilon)^2 \left( \sum_{k=0}^{\infty} \frac{1}{(k!)^2} \right) q_0 \leq B_n \leq (1+\epsilon) \left( \sum_{k=0}^{\infty} \frac{1}{(k!)^2} \right) q_0.
\]
By \eqref{eq: sum of ck}, we have  
\[
q_0 = e^2 (1+o(1)) (n-2) \cdot \big[(n-2)!\big]^2.
\]  
Substituting this into the expression above, we obtain the desired asymptotic behavior \eqref{eq: asymptotic formula for B_n} for \( B_n \).  

The asymptotic behavior \eqref{eq: asymptotic formula for A_n} follows similarly using the  estimates \eqref{eq: sum of ak and bk}, \eqref{eq: pk ratio bound} and \eqref{eq: pk ratio asym} instead, so we omit the details.

Now it remains to prove the  estimates \eqref{eq: sum of ck}---\eqref{eq: pk ratio asym}.

	Observe that for fixed \( n \) and \( k \), the quantity \( c_{t,k} \) is increasing in \( t \), and for \( 0 \leq t \leq n-2k-4 \), we have
	\be\label{eq: ratio of ctk between t+1 and t}
	\frac{c_{t+1,k}}{c_{t,k}}=(n-2k-3-t)\cdot \frac{t+k+2}{t+1}.
	\ee
Thus, using the inequality \( \frac{c_{t+1,k}}{c_{t,k}} \geq n-2k-3-t \) for \( t \in [0, n-2k-4] \), we obtain the following inequality for \( 0 \leq k \leq \left\lfloor \frac{n-3}{2} \right\rfloor \):
	\be\label{eq: upper bound ctk}
	\sum_{t=0}^{n-2k-3}c_{t,k}\leq (1+\frac{1}{1!}+\frac{1}{2!}+\frac{1}{ 3!}+\frac{1}{4!}+\cdots )c_{n-2k-3,k}=ec_{n-2k-3,k}
	\ee

Note that for \( 0 \leq k \leq \left\lfloor \frac{n-3}{2} \right\rfloor - 1 \), we have  
\begin{equation} \label{eq: ratio of y_k}
\frac{q_{k+1}}{q_k} = \frac{(n-2k-3)!}{(n-2k-5)!} \cdot \frac{\left( \sum_{t=0}^{n-2k-5} c_{t,k+1} \right)^2}{\left( \sum_{t=0}^{n-2k-3} c_{t,k} \right)^2}.
\end{equation}
Thus, applying \eqref{eq: upper bound ctk} and the trivial lower bound \( \sum_{t=0}^{n-2k-3} c_{t,k} \geq c_{n-2k-3,k} \), we obtain \eqref{eq: qk ratio bound}:  
\begin{align*}
\frac{q_{k+1}}{q_k} 
&\leq (n-2k-3)(n-2k-4) \cdot \frac{e^2 c_{n-2k-5,k+1}^2}{c_{n-2k-3,k}^2} \nonumber\\
&= \frac{e^2}{(k+1)^2} \cdot \frac{(n-2k-3)(n-2k-4)}{(n-k-2)^2} \leq \frac{e^2}{(k+1)^2}. 
\end{align*}

For fixed \( k \), as \( n \to \infty \), using \eqref{eq: ratio of ctk between t+1 and t}, we obtain  
\begin{align*}
\sum_{t=0}^{n-2k-3} c_{t,k} 
&= c_{n-2k-3,k} \cdot \bigg( 1 + \frac{n-2k-3}{n-k-2} + \frac{1}{2!} \cdot \frac{(n-2k-3)(n-2k-4)}{(n-k-2)(n-k-3)} \nonumber\\
&\quad + \frac{1}{3!} \cdot \frac{(n-2k-3)(n-2k-4)(n-2k-5)}{(n-k-2)(n-k-3)(n-k-4)} + \cdots \bigg) \nonumber\\
&\geq e(1-o(1)) c_{n-2k-3,k}.
\end{align*}  
This, together with \eqref{eq: upper bound ctk}, implies \eqref{eq: sum of ck}. Substituting \eqref{eq: sum of ck} into \eqref{eq: ratio of y_k}, we obtain \eqref{eq: qk ratio asym}:  
\[
\frac{q_{k+1}}{q_k} = (1+o(1))(n-2k-3)(n-2k-4) \cdot \frac{c_{n-2k-5,k+1}^2}{c_{n-2k-3,k}^2} = (1+o(1))\frac{1}{(k+1)^2}.
\]
The estimates \eqref{eq: sum of ak and bk}, \eqref{eq: pk ratio bound}, and \eqref{eq: pk ratio asym} can be derived in a similar manner, and we omit the details.
\end{proof}


\begin{proof}[Proof of Theorem~\ref{thm: SAW on complete}]
	The case \( n = 2 \) is trivial to verify. For \( n = 3,4,5 \), we compute from Lemma \ref{lem: formula for the number of SAW on complete graphs times K2} that  
	\[
	A_3 = 0, \quad B_3 = 1, \quad A_4 = 4, \quad B_4 = 18, \quad A_5 = 144, \quad B_5 = 387.
	\]
	Hence, by Lemma \ref{lem: bijections between some subsets of SAW}, we obtain the desired inequality  
	\[
	\big|\mathscr{S}(u_0,v_0)\big| < \big| \mathscr{S}(u_0,v_1) \big|
	\]
	for \( n \in \{3,4,5\} \).  
	
	For sufficiently large \( n \), Lemma \ref{lem: number of SAW estimates} ensures that \( A_n < B_n \), which again implies the desired inequality  
	\[
	\big|\mathscr{S}(u_0,v_0)\big| < \big| \mathscr{S}(u_0,v_1) \big|
	\]  
	by Lemma \ref{lem: bijections between some subsets of SAW}.
\end{proof}


\bibliographystyle{plain} 
\bibliography{bunkbed_ref}

\end{document}